
\documentclass[12pt]{article}
\usepackage{amsfonts}
\usepackage{amssymb}
\usepackage{amsmath}

\setcounter{MaxMatrixCols}{10}

\newtheorem{theorem}{Theorem}[section]

\newtheorem{corollary}{Corollary}[section]

\newtheorem{definition}{Definition}[section]
\newtheorem{example}{Example}[section]

\newtheorem{lemma}{Lemma}[section]

\newtheorem{remark}{Remark}[section]

\newenvironment{proof}[1][Proof]{\noindent\textbf{#1.} }{\ \rule{0.5em}{0.5em}}
\begin{document}

\title{Centrosymmetric nonnegative realization of spectra\thanks{
Supported by CONICYT-FONDECYT 1170313, Chile; CONICYT-PAI 79160002, 2016,
Chile.}}
\date{}
\author{Ana I. Julio, Oscar Rojo, Ricardo L. Soto\thanks{Corresponding author, ajulio@ucn.cl, (Ana I Julio), orojo@ucn.cl (Oscar Rojo), rsoto@ucn.cl (Ricardo L.
Soto)} \\
{\small Departamento de Matem\'{a}ticas, Universidad Cat\'{o}lica del Norte}%
\\
{\small Casilla 1280, Antofagasta, Chile.}}
\maketitle

\begin{abstract}
A list $\Lambda =\{\lambda _{1},\lambda _{2},\ldots ,\lambda _{n}\}$ of
complex numbers is said to be realizable if it is the spectrum of an
entrywise nonnegative matrix. In this paper we intent to characterize those
lists of complex numbers, which are realizable by a centrosymmetric
nonnegative matrix. In particular, we show that lists of nonnegative real
numbers, and lists of complex numbers of Suleimanova type (except in one
particular case), are always the spectrum of some centrosymmetric nonnegative matrix. For the general lists we give sufficient conditions via a perturbation result. We also show that for $n=4,$ every realizable list of
real numbers is also realizable by a nonnegative centrosymmetric matrix.
\end{abstract}


\textit{AMS classification: \ \ 15A18, 15A29}

\textit{Key words: Inverse eigenvalue problem, nonnegative matrix,
centrosymmetric matrix.}

\section{Introduction}
The \textit{nonnegative inverse eigenvalue problem }(hereafter NIEP) is the
problem of finding necessary and suffcient conditions for the existence of a 
$n\times n$ entrywise nonnegative matrix with prescribed complex spectrum $%
\Lambda =\{\lambda _{1},\lambda _{2},\ldots ,\lambda _{n}\}.$ If there
exists an $n\times n$ nonnegative matrix $A$ with spectrum $\Lambda $ we say
that $\Lambda $ is realizable and that $A$ is the realizing matrix. A
complete solution for the NIEP is known only for $n\leq 4,$ which shows the
difficulty of the problem. Throughout this paper, if $\Lambda =\{\lambda
_{1},\lambda _{2},\ldots ,\lambda _{n}\}$ is realizable by a nonnegative
matrix $A$, then $\lambda _{1}=\rho (A)=\max \{\left\vert \lambda
_{i}\right\vert ,$ $\lambda _{i}\in \Lambda \}$ is the Perron eigenvalue of $%
A.$ We shall denote the transpose of a matrix $A$ by $A^{T}$, $\lfloor
x\rfloor $ and $\lceil x\rceil $ denote the largest integer less than or
equal to $x$ and the smallest integer greater than or equal to $x,$
respectively. $J$ will denote the \textit{counteridentity} matrix, that is, $%
J=[\mathbf{e}_{n}\mid \mathbf{e}_{n-1}\mid \cdots \mid \mathbf{e}_{1}]$.
Then it is clear that $J^{T}=J,J^{2}=I$. \newline
Observe that multiplying a matrix $A$ by $J$ from the left results in
reversing its rows, while multiplying $A$ by $J$ from the right results in
reversing its columns. 
A vector $x$ is called \textit{symmetric} if $Jx=x$. \\ \\
The set of all $n\times n$ real matrices with constant row sums equal to $%
\alpha \in 
\mathbb{R}
$ will be denote by $\mathcal{CS}_{\alpha }.$ It is clear that $\mathbf{e}%
=[1,1,\ldots ,1]^{T}$ is an eigenvector of any matrix in $\mathcal{CS}%
_{\alpha },$ corresponding to the eigenvalue $\alpha .$ The
relevance of matrices with constant row sums is due to the well known fact
that the problem of finding a nonnegative matrix $A$ with spectrum $\Lambda
=\{\lambda _{1},\ldots ,\lambda _{n}\}$ is equivalent to the problem of
finding a nonnegative matrix $B\in \mathcal{CS}_{\lambda _{1}}$ with
spectrum $\Lambda ,$ that is, $A$ and $B$ are similar if the Perron
eigenvalue is simple and they are cospectral otherwise (see \cite{Julio2}).
\\ \\
In this paper we study the NIEP for centrosymmetric matrices.
Centrosymmetric matrices appear in many areas: physics, communication
theory, differential equations, numerical analysis, engineering, statistics,
etc. Now we state the definition and certain properties about
centrosymmetric matrices.
\begin{definition}
A matrix $C=(c_{ij})\in M_{m,n}$ is said to be centrosymmetric, if its
entries satisfy the relation 
\begin{equation*}
c_{i,j}=c_{m-i+1,n-j+1}.
\end{equation*}
\end{definition}\ \\
The definition means that a centrosymmetric matrix $C$ can be written as $%
J_{m}CJ_{n}=C,$ where $J_{n}$ is the $n\times n$ counteridentity matrix.
That is, $J_{n}=\left[ \mathbf{e}_{n}\mid \mathbf{e}_{n-1}\mid \cdots \mid 
\mathbf{e}_{1}\right] .$ For $m=n,$ we shall write $J$ instead $J_{n}.$%
\newline
\ \newline
The following properties and results on centrosymmetric matrices are easy to
verify (see \cite{Cantoni}).

\begin{lemma}
\cite{Cantoni}\label{Le1} Let $C_{1}$ and $C_{2}$ be centrosymmetric
matrices, and $\alpha _{1},\alpha _{2}\in 
\mathbb{R}
.$ Then, \newline
$i)$ $C_{1}^{-1}$, if exists, $ii)$ $C_{1}^{T}$, 
$iii)$ $\alpha _{1}C_{1}\pm \alpha _{2}C_{2}$, 
$iv)$ $C_{1}C_{2}$ are all centrosymmetric.
\end{lemma}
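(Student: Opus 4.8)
The plan is to set aside the entrywise condition $c_{i,j}=c_{m-i+1,n-j+1}$ and instead work throughout with the equivalent matrix characterization $JCJ=C$ recorded just above the statement, since index bookkeeping becomes clumsy under products and inverses. The only algebraic facts I will use are the two already noted in the excerpt, $J^{T}=J$ and $J^{2}=I$ (so $J^{-1}=J$). Thus I start from the hypotheses $JC_{1}J=C_{1}$ and $JC_{2}J=C_{2}$, and for each of the four candidate matrices $M$ it suffices to verify $JMJ=M$.

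Parts $ii)$ and $iii)$ are immediate. Transposing $JC_{1}J=C_{1}$ and using $J^{T}=J$ yields $JC_{1}^{T}J=C_{1}^{T}$. For the linear combination, bilinearity of $J$-conjugation gives $J(\alpha_{1}C_{1}\pm\alpha_{2}C_{2})J=\alpha_{1}(JC_{1}J)\pm\alpha_{2}(JC_{2}J)=\alpha_{1}C_{1}\pm\alpha_{2}C_{2}$. For $i)$, when $C_{1}^{-1}$ exists I invert both sides of $JC_{1}J=C_{1}$ to obtain $J^{-1}C_{1}^{-1}J^{-1}=C_{1}^{-1}$, and then replace $J^{-1}$ by $J$ to get $JC_{1}^{-1}J=C_{1}^{-1}$. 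For $iv)$ I insert $J^{2}=I$ between the factors: $J(C_{1}C_{2})J=(JC_{1}J)(JC_{2}J)=C_{1}C_{2}$.

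No part presents a genuine obstacle once the characterization $JCJ=C$ is in hand; the product in $iv)$ is the only place needing anything past a one-line manipulation, and even there the entire idea is the substitution $C_{1}C_{2}=C_{1}J^{2}C_{2}$. The substantive step is really the equivalence between the entrywise definition and $JCJ=C$, which the excerpt already grants; granting that, the lemma follows directly from $J^{T}=J$, $J^{2}=I$, and the associativity and bilinearity of matrix multiplication.
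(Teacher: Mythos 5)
Your argument is correct and complete: all four parts follow cleanly from the characterization $JCJ=C$ together with $J^{T}=J$ and $J^{2}=I$, exactly as you carry out. The paper itself offers no proof of this lemma --- it is stated as a known result from the cited reference and described as ``easy to verify'' --- and your verification is the standard one that the authors evidently have in mind.
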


\begin{lemma}
\cite{Cantoni}\label{eigen} Let $C$ be an $n\times n$ centrosymmetric nonnegative matrix. If $\mathbf{v}$ is an eigenvector of $C$
corresponding to the eigenvalue $\lambda ,$ then $J\mathbf{v}$ is also an
eigenvector of $C$ corresponding to $\lambda $. Moreover, if $\lambda _{1}$
is the Perron eigenvalue of $C$, then there is a nonnegative eigenvector $%
\mathbf{x}$ such that $J\mathbf{x}=\mathbf{x}$.
\end{lemma}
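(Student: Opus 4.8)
The plan is to exploit the single algebraic fact that a centrosymmetric matrix commutes with the counteridentity. Starting from the defining relation $JCJ=C$ and using $J^{2}=I$ (equivalently $J^{-1}=J$), I would first record the commutation relation $CJ=JC$. This one identity is the engine behind both assertions of the lemma.

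For the first claim, suppose $C\mathbf{v}=\lambda \mathbf{v}$. Applying $J$ on the left and using $CJ=JC$ gives
\begin{equation*}
C(J\mathbf{v})=J(C\mathbf{v})=\lambda (J\mathbf{v}),
\end{equation*}
so $J\mathbf{v}$ is an eigenvector corresponding to the same eigenvalue $\lambda $; it is nonzero because $J$ is a permutation matrix, hence invertible, so $J\mathbf{v}\neq \mathbf{0}$ whenever $\mathbf{v}\neq \mathbf{0}$.

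For the second claim, I would invoke Perron--Frobenius theory: since $C$ is nonnegative with spectral radius $\lambda _{1}=\rho (C)$, there exists a nonnegative eigenvector $\mathbf{u}\geq \mathbf{0}$, $\mathbf{u}\neq \mathbf{0}$, with $C\mathbf{u}=\lambda _{1}\mathbf{u}$. By the first part, $J\mathbf{u}$ is again a nonnegative eigenvector for $\lambda _{1}$, and I would form the symmetrized vector
\begin{equation*}
\mathbf{x}=\mathbf{u}+J\mathbf{u}.
\end{equation*}
Then $C\mathbf{x}=\lambda _{1}\mathbf{x}$ by linearity, $\mathbf{x}\geq \mathbf{0}$ as a sum of nonnegative vectors, and $J\mathbf{x}=J\mathbf{u}+J^{2}\mathbf{u}=J\mathbf{u}+\mathbf{u}=\mathbf{x}$, so $\mathbf{x}$ is symmetric in the sense defined earlier.

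The only point requiring a moment's care --- and the closest thing to an obstacle --- is verifying that $\mathbf{x}\neq \mathbf{0}$, since an eigenvector must be nonzero. This is immediate: if $u_{i}>0$ for some index $i$, then the $i$-th entry of $\mathbf{x}$ equals $u_{i}+u_{n-i+1}\geq u_{i}>0$, because all entries of $\mathbf{u}$ are nonnegative. Hence $\mathbf{x}$ is a genuine nonnegative eigenvector fixed by $J$. (In the irreducible case one could alternatively note that the Perron vector is unique up to scaling, which forces $J\mathbf{u}=\mathbf{u}$ directly; but the averaging construction $\mathbf{u}+J\mathbf{u}$ handles the reducible case as well, with no extra hypotheses.)
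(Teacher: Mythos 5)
Your proof is correct. The paper itself gives no proof of this lemma --- it is quoted from the reference of Cantoni and Butler with the remark that such facts are ``easy to verify'' --- so there is no in-paper argument to compare against; your derivation ($JCJ=C$ and $J^{2}=I$ giving $CJ=JC$, hence $C(J\mathbf{v})=\lambda J\mathbf{v}$, followed by symmetrizing a Perron--Frobenius eigenvector via $\mathbf{x}=\mathbf{u}+J\mathbf{u}$) is exactly the standard one, and you correctly handle the one genuine subtlety, namely that $\mathbf{x}\neq\mathbf{0}$ because the entries of $\mathbf{u}$ are nonnegative and at least one is positive.
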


\begin{theorem}
\cite{Cantoni}\label{Cen 1} Let $C$ be an $n\times n$ centrosymmetric
matrix. \newline
$i)$ If $n=2m,$ then $C$ can be written as 
\begin{equation*}
C=%
\begin{bmatrix}
A & JBJ \\ 
B & JAJ%
\end{bmatrix}%
,
\end{equation*}%
where $A,B$ and $J$ are $m\times m$ matrices. \newline
$ii)$ If $n=2m+1,$ then $C$ can be written as 
\begin{equation*}
C=%
\begin{bmatrix}
A & \mathbf{x} & JBJ \\ 
\mathbf{y}^{T} & c & \mathbf{y}^{T}J \\ 
B & J\mathbf{x} & JAJ%
\end{bmatrix}%
,
\end{equation*}%
where $A,B$ and $J$ are $m\times m$ matrices, $\mathbf{x}$ and $\mathbf{y}$
are $m$-dimensional vectors, and $c$ is a real number.
\end{theorem}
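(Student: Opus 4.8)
The plan is to derive both block forms directly from the defining identity $J_nCJ_n=C$, by partitioning $C$ and the counteridentity conformally and comparing the two sides block by block. Throughout I write $J$ for the $m\times m$ counteridentity, so $J^2=I$, and I record first the block decomposition of the large counteridentity. Since reversing $2m$ indices interchanges the first $m$ positions with the last $m$ and reverses each half, for $n=2m$ one has
\[
J_n=\begin{bmatrix} 0 & J \\ J & 0 \end{bmatrix},
\]
while for $n=2m+1$ the central index is fixed, giving
\[
J_n=\begin{bmatrix} 0 & 0 & J \\ 0 & 1 & 0 \\ J & 0 & 0 \end{bmatrix}.
\]

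For part $i)$, I would partition $C$ into four $m\times m$ blocks $C_{ij}$ and compute
\[
J_nCJ_n=\begin{bmatrix} JC_{22}J & JC_{21}J \\ JC_{12}J & JC_{11}J \end{bmatrix}.
\]
Imposing $J_nCJ_n=C$ then forces $C_{22}=JC_{11}J$ and $C_{12}=JC_{21}J$. Setting $A:=C_{11}$ and $B:=C_{21}$ yields the asserted form, since then $C_{22}=JAJ$ and $C_{12}=JBJ$.

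For part $ii)$, I would use the $3\times3$ block partition of $C$ with diagonal block sizes $m,1,m$, so that the central block is a scalar, the $(1,2)$ and $(3,2)$ blocks are $m\times1$ columns, and the $(2,1)$ and $(2,3)$ blocks are $1\times m$ rows. Performing the same multiplication $J_nCJ_n$ and equating with $C$ produces the relations $C_{33}=JC_{11}J$, $C_{13}=JC_{31}J$, $C_{32}=JC_{12}$, and $C_{23}=C_{21}J$, whereas the central scalar $C_{22}$ is left completely free. Writing $A:=C_{11}$, $B:=C_{31}$, $\mathbf{x}:=C_{12}$, $\mathbf{y}^T:=C_{21}$, and $c:=C_{22}$ reconstructs exactly the displayed matrix.

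The computation is essentially mechanical, so the only real care needed is the bookkeeping of left versus right multiplication by the small counteridentity $J$: a left factor reverses the rows of a block while a right factor reverses its columns, and the off-diagonal blocks of $J_n$ couple the two. Once the block form of $J_n$ is pinned down correctly, matching the two sides of $J_nCJ_n=C$ block by block is routine, and the naming of the free blocks $A,B,\mathbf{x},\mathbf{y},c$ immediately delivers both stated representations.
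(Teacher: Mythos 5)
Your proposal is correct and complete: the block decompositions of $J_n$ you write down are right, and equating $J_nCJ_n$ with $C$ block by block yields exactly the relations $C_{22}=JC_{11}J$, $C_{12}=JC_{21}J$ in the even case and $C_{33}=JC_{11}J$, $C_{13}=JC_{31}J$, $C_{32}=JC_{12}$, $C_{23}=C_{21}J$ with $C_{22}$ free in the odd case, which is all that is needed. The paper itself gives no proof of this theorem --- it is quoted from Cantoni and Butler with the remark that such facts are easy to verify --- and your direct computation from the defining identity $J_nCJ_n=C$ is precisely the standard verification being alluded to.
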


\begin{theorem}
\cite{Cantoni}\label{Cen 2} Let $C$ be an $n\times n$ centrosymmetric
matrix. \newline
$i)$ If $C=
\begin{bmatrix}
A & JBJ \\ 
B & JAJ%
\end{bmatrix}%
$, then $C$ is orthogonally similar to the matrix 
\begin{equation*}
\begin{bmatrix}
A+JB &  \\ 
& A-JB%
\end{bmatrix}%
.
\end{equation*}
Moreover, if $C$ is nonnegative
with the Perron eigenvalue $\lambda _{1}$, then $\lambda _{1}$ is the Perron
eigenvalue of $A+JB$. \newline
$ii)$ If $C$ is written as $C=%
\begin{bmatrix}
A & \mathbf{x} & JBJ \\ 
\mathbf{y}^{T} & c & \mathbf{y}^{T}J \\ 
B & J\mathbf{x} & JAJ%
\end{bmatrix}%
$, then $C$ is orthogonally similar to the matrices $%
\begin{bmatrix}
c & \sqrt{2}\mathbf{y}^{T} &  \\ 
\sqrt{2}\mathbf{x} & A+JB &  \\ 
&  & A-JB%
\end{bmatrix}%
$ and \newline
$%
\begin{bmatrix}
A+JB & \sqrt{2}\mathbf{x} &  \\ 
\sqrt{2}\mathbf{y}^{T} & c &  \\ 
&  & A-JB%
\end{bmatrix}%
$. Moreover, if $C$ is a
nonnegative matrix with the Perron eigenvalue $\lambda _{1}$, then $\lambda
_{1}$ is the Perron eigenvalue of $\begin{bmatrix}c & \sqrt{2}\mathbf{y}^{T}\\ \sqrt{2}\mathbf{x} & A+JB\end{bmatrix}$.
\end{theorem}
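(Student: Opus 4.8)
The plan is to exhibit, in each parity case, an explicit orthogonal matrix built only from the identity block $I$ and the counteridentity $J$, and then to verify the asserted similarities by a direct block multiplication that repeatedly exploits $J^{T}=J$ and $J^{2}=I$. First, for the even case $n=2m$, I would take
\[
P=\frac{1}{\sqrt{2}}\begin{bmatrix} I & I \\ J & -J \end{bmatrix}.
\]
Using $J^{T}=J$ and $J^{2}=I$ one checks at once that $P^{T}P=I$, so $P$ is orthogonal. Writing $C=\begin{bmatrix} A & JBJ \\ B & JAJ \end{bmatrix}$ and computing $CP$ first, the products $JBJ\cdot J=JB$ and $JAJ\cdot J=JA$ collapse; then forming $P^{T}CP$ the off-diagonal blocks cancel (each reduces to $A-A$ or $JB-JB$) while the diagonal blocks double, leaving precisely $\mathrm{diag}(A+JB,\;A-JB)$ after the factor $\tfrac12$.

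For the odd case $n=2m+1$, I would use the bordered analogue
\[
Q=\frac{1}{\sqrt{2}}\begin{bmatrix} I & 0 & I \\ 0 & \sqrt{2} & 0 \\ J & 0 & -J \end{bmatrix},
\]
again orthogonal by the same $J^{2}=I$ computation. Carrying out $Q^{T}CQ$ block by block, the third block row and column decouple (they cancel to zero exactly as before), the scalar $c$ remains on the diagonal, and the vectors $\mathbf{x},\mathbf{y}$ pick up the factor $\sqrt{2}$ coming from the middle row and column of $Q$; this yields the second of the two displayed matrices. The first displayed matrix is then obtained from the second by the permutation similarity that moves the middle coordinate to the top—equivalently, by placing the middle column of $Q$ first—so both forms follow together.

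It remains to settle the Perron statements. By Lemma \ref{eigen}, $C$ has a nonnegative Perron eigenvector $\mathbf{v}$ with $J\mathbf{v}=\mathbf{v}$; writing $\mathbf{v}=[\mathbf{u}^{T},\mathbf{w}^{T}]^{T}$ in the even case, the symmetry forces $\mathbf{w}=J\mathbf{u}$. Applying $P^{T}$ then annihilates the second block (since $\mathbf{u}-J\mathbf{w}=0$) and leaves $[\sqrt{2}\,\mathbf{u}^{T},\,0]^{T}$, so $\mathbf{u}\ge 0$ is an eigenvector of $A+JB$ for $\lambda_{1}$. Because $A+JB$ is nonnegative (the sum of the nonnegative block $A$ with the row-permutation $JB$ of the nonnegative block $B$) and its spectrum is contained in that of $C$, I would conclude $\lambda_{1}\le\rho(A+JB)\le\rho(C)=\lambda_{1}$, hence $\lambda_{1}=\rho(A+JB)$. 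The odd case is identical, with the transformed eigenvector $[\sqrt{2}\,\mathbf{u}^{T},\,v_{0},\,0]^{T}$ landing in the leading $(m+1)\times(m+1)$ block, which is again nonnegative.

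The block computations are routine; the only delicate point is the closing step of the Perron argument, where one must combine the nonnegativity of the reduced block with the spectral splitting—rather than merely display $\lambda_{1}$ as some eigenvalue—in order to identify $\lambda_{1}$ with the spectral radius (the Perron root) of the smaller matrix.
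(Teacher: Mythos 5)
Your proof is correct: the orthogonal conjugations by $P$ and $Q$ yield exactly the stated block forms, and your closing step --- combining Lemma \ref{eigen} (a nonnegative symmetric Perron vector, whose image under $P^{T}$ or $Q^{T}$ is supported on the leading block) with the spectral inclusion $\sigma(A+JB)\subseteq\sigma(C)$ to force $\lambda_{1}=\rho(A+JB)$ rather than merely $\lambda_{1}\in\sigma(A+JB)$ --- is the right way to handle the one genuinely delicate point. The paper offers no proof of this theorem (it is imported from Cantoni--Butler \cite{Cantoni} with the remark that it is easy to verify), and your argument is precisely the standard one from that reference, so you have simply supplied the details the paper delegates to the citation.
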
\ \\
The following theorem, due to R. Rado and published by H. Perfect in \cite%
{Perfect1}, will be used to throughout the paper to show some of our
results. Rado's theorem show how to modify $r$ eigenvalues of an $n\times n$
matrix, $r<n,$ via a rank-$r$ perturbation, without changing any of the
remaining $n-r$ eigenvalues (see \cite{Julio, Soto4} for the way in which
Rado's result has been applied to the NIEP). The case $r=1,$ is the well
known Brauer's theorem \cite{Brauer}, which states that 
\begin{align}
\left. 
\begin{array}{c}
\text{If }A\text{ has eigenvalues }\lambda _{1},\lambda _{2},\ldots ,\lambda
_{n},\text{ then }A+\mathbf{vq}^{T}, \\ 
\text{has eigenvalues }\lambda _{1},\ldots ,\lambda _{k-1},\lambda _{k}+%
\mathbf{v}^{T}\mathbf{q},\lambda _{k+1},\ldots ,\lambda _{n}, \\ 
\text{where }A\mathbf{v}=\lambda _{k}\mathbf{v}\text{ \ and \ }\mathbf{q\in 
\mathbb{C}
}^{n}.\text{\ }%
\end{array}%
\right\}  \label{Bra}
\end{align}

\begin{theorem}
Rado \cite{Perfect1}\label{Rado} Let $M$ be an $n\times n$ arbitrary matrix
with spectrum $\Lambda =\{\lambda _{1},\lambda _{2},\ldots ,\lambda _{n}\}.$
Let $X=\left[ \mathbf{x}_{1}\mid \cdots \mid \mathbf{x}_{r}\right] $ be such
that $rank(X)=r$ and $M\mathbf{x}_{i}=\lambda _{i}\mathbf{x}_{i},$ $%
i=1,\ldots ,r,$ $r<n.$ Let $\mathcal{C}$ be an $r\times n$ arbitrary matrix.
Then $M+X\mathcal{C}$ has eigenvalues $\mu _{1},\ldots ,\mu _{r},\lambda
_{r+1},\ldots ,\lambda _{n},$ where $\mu _{1},\ldots ,\mu _{r}$ are
eigenvalues of the matrix $\Omega +\mathcal{C}X$ with $\Omega =diag\{\lambda
_{1},\ldots ,\lambda _{r}\}.$
\end{theorem}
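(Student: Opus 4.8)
The plan is to conjugate both $M$ and $M+X\mathcal{C}$ by a single invertible matrix that turns the eigenvector data encoded in $X$ into a clean block structure, and then simply read off the spectra from the resulting block triangular forms. First I would extend the $n\times r$ matrix $X$, whose columns are linearly independent since $\mathrm{rank}(X)=r$, to an invertible $n\times n$ matrix $S=[X\mid Y]$, where $Y$ is any $n\times(n-r)$ matrix completing the columns of $X$ to a basis of $\mathbb{C}^{n}$. Partitioning the inverse conformally as $S^{-1}=\begin{bmatrix}U\\ V\end{bmatrix}$, with $U$ of size $r\times n$ and $V$ of size $(n-r)\times n$, the identity $S^{-1}S=I_{n}$ yields the four relations $UX=I_{r}$, $UY=0$, $VX=0$, $VY=I_{n-r}$; in particular $S^{-1}X=\begin{bmatrix}I_{r}\\ 0\end{bmatrix}$.

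Next I would exploit the eigenvalue hypotheses, which combine into the single identity $MX=X\Omega$ with $\Omega=\mathrm{diag}\{\lambda_{1},\ldots,\lambda_{r}\}$. Computing $S^{-1}MS=[\,S^{-1}MX\mid S^{-1}MY\,]=[\,S^{-1}X\Omega\mid S^{-1}MY\,]$ and using $S^{-1}X=\begin{bmatrix}I_{r}\\ 0\end{bmatrix}$ produces the block upper triangular form $\begin{bmatrix}\Omega & UMY\\ 0 & VMY\end{bmatrix}$. Since similarity preserves the spectrum, the list $\Lambda$ splits as the eigenvalues of $\Omega$, namely $\lambda_{1},\ldots,\lambda_{r}$, together with the eigenvalues of the trailing block $VMY$, which are therefore forced to be $\lambda_{r+1},\ldots,\lambda_{n}$.

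Finally I would apply the same similarity to the perturbed matrix. Because $S^{-1}X\mathcal{C}S=\begin{bmatrix}I_{r}\\ 0\end{bmatrix}\mathcal{C}[X\mid Y]=\begin{bmatrix}\mathcal{C}X & \mathcal{C}Y\\ 0 & 0\end{bmatrix}$, adding this to the block form above gives
\[
S^{-1}(M+X\mathcal{C})S=\begin{bmatrix}\Omega+\mathcal{C}X & UMY+\mathcal{C}Y\\ 0 & VMY\end{bmatrix},
\]
which is again block upper triangular with the \emph{same} trailing block $VMY$. Reading off the eigenvalues from the two diagonal blocks, the spectrum of $M+X\mathcal{C}$ consists of the eigenvalues $\mu_{1},\ldots,\mu_{r}$ of $\Omega+\mathcal{C}X$ together with the eigenvalues $\lambda_{r+1},\ldots,\lambda_{n}$ of $VMY$, which is exactly the assertion.

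I do not anticipate a serious obstacle: once the correct similarity $S=[X\mid Y]$ is chosen, the argument is essentially bookkeeping. The one point deserving care is justifying $S^{-1}X=\begin{bmatrix}I_{r}\\ 0\end{bmatrix}$ and observing that the off-diagonal block $UMY+\mathcal{C}Y$ plays no role in the spectrum precisely because both transformed matrices are block triangular. The entire content of the theorem is then the structural fact that the perturbation $X\mathcal{C}$ leaves the lower-right block $VMY$ (carrying $\lambda_{r+1},\ldots,\lambda_{n}$) untouched while modifying only the leading $r\times r$ block, transforming $\Omega$ into $\Omega+\mathcal{C}X$.
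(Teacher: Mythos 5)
Your argument is correct and complete: extending $X$ to an invertible $S=[X\mid Y]$, noting $MX=X\Omega$, and reading off the two block upper triangular forms $S^{-1}MS$ and $S^{-1}(M+X\mathcal{C})S$ with the common trailing block $VMY$ gives exactly the claimed spectrum (the identification of the eigenvalues of $VMY$ with $\lambda_{r+1},\ldots,\lambda_n$ following by cancelling $\det(\lambda I-\Omega)$ from the characteristic polynomial of $M$). The paper itself states this theorem without proof, citing Perfect, so there is nothing to compare against; your similarity argument is the standard one found in the literature on Rado's extension of Brauer's theorem.
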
\ \\
In \cite{SJC} the authors prove that a realizable list of complex numbers of
Suleimanova type $\Lambda =\{\lambda _{1},\lambda _{2},\ldots ,\lambda
_{n}\},$ that is, with $\lambda _{1}$ being the Perron eigenvalue, and 
\begin{equation*}
\lambda _{i}\in \mathcal{F}=\{\lambda _{i}\in \mathbb{C}:Re\lambda _{i}\leq
0,|Re\lambda _{i}|\geq |Im\lambda _{i}|\},\text{ }i=2,\ldots ,n,
\end{equation*}%
is in particular realizable with spectrum $\Lambda $ and with prescribed
diagonal entries $\omega _{1},\omega _{2},\ldots ,\omega _{n}$ if and only
if $\sum\limits_{i=1}^{n}\omega _{i}=\sum\limits_{i=1}^{n}\lambda _{i}.$
This result, which we shall use in Section $3,$ follows directly from the
following lemma and (\ref{Bra}), which we set here for sake of completeness:
\begin{lemma}
\label{SJC}Let $\Lambda ^{\prime }=\{-\sum\limits_{i=2}^{n}\lambda
_{i},\lambda _{2},\ldots ,\lambda _{n}\}$ be a realizable list of complex
numbers, with Perron eigenvalue $-\sum\limits_{i=2}^{n}\lambda _{i}.$ Then
for any $\lambda _{1}\geq -\sum\limits_{i=2}^{n}\lambda _{i},$ the list $%
\Lambda =\{\lambda _{1},\lambda _{2},\ldots ,\lambda _{n}\}$ is the spectrum
of a nonnegative matrix with prescribed diagonal entries $\Gamma =\{\omega _{1},\omega _{2},\ldots ,\omega _{n}\},$ if and only if $\sum%
\limits_{i=1}^{n}\omega _{i}=\sum\limits_{i=1}^{n}\lambda _{i}.$
\end{lemma}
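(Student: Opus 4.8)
The plan is to establish the equivalence by treating its two directions separately, with necessity being immediate and sufficiency carried out through a single Brauer perturbation (\ref{Bra}). For necessity, suppose $\Lambda=\{\lambda_{1},\ldots,\lambda_{n}\}$ is realized by a nonnegative matrix $M$ with diagonal entries $\omega_{1},\ldots,\omega_{n}$. Computing $\operatorname{tr}(M)$ in two ways at once gives $\sum_{i=1}^{n}\omega_{i}=\operatorname{tr}(M)=\sum_{i=1}^{n}\lambda_{i}$, which is exactly the asserted condition; this uses nothing beyond realizability.

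For sufficiency, the first move I would make is to exploit the special structure of $\Lambda'$. Write $\rho=-\sum_{i=2}^{n}\lambda_{i}$ for its Perron eigenvalue, so that $\operatorname{tr}(\Lambda')=\rho+\sum_{i=2}^{n}\lambda_{i}=0$. By the equivalence recalled in the Introduction, realizability of $\Lambda'$ yields a nonnegative matrix $B\in\mathcal{CS}_{\rho}$ with spectrum $\Lambda'$, and hence $B\mathbf{e}=\rho\mathbf{e}$. Since $\operatorname{tr}(B)=\sum\Lambda'=0$ and $B$ is nonnegative, every diagonal entry of $B$ must vanish. This is the crucial observation: it means that the diagonal of any additive perturbation of $B$ is governed entirely by the perturbation.

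Next I would apply Brauer's theorem with the Perron eigenvector $\mathbf{v}=\mathbf{e}$ and the choice $\mathbf{q}=[\omega_{1},\ldots,\omega_{n}]^{T}$, forming $M=B+\mathbf{e}\,\mathbf{q}^{T}$. Three facts then need verification. For the spectrum, (\ref{Bra}) shows $M$ has eigenvalues $\rho+\mathbf{e}^{T}\mathbf{q},\lambda_{2},\ldots,\lambda_{n}$, and the hypothesis $\sum\omega_{i}=\sum\lambda_{i}$ gives $\mathbf{e}^{T}\mathbf{q}=\sum_{i=1}^{n}\omega_{i}=\sum_{i=1}^{n}\lambda_{i}=\lambda_{1}-\rho$, so the perturbed Perron eigenvalue is exactly $\rho+(\lambda_{1}-\rho)=\lambda_{1}$ and the spectrum is $\Lambda$. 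For the diagonal, the $(i,i)$ entry of $M$ is $b_{ii}+\omega_{i}=\omega_{i}$, by the vanishing-diagonal observation. For nonnegativity, the $(i,j)$ entry of $M$ is $b_{ij}+\omega_{j}$, a sum of two nonnegative numbers, where $\omega_{j}\geq 0$ holds because the prescribed diagonal of a nonnegative matrix must be nonnegative.

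There is no genuinely hard technical obstacle here; the argument is short once the setup is fixed. The one point carrying all the weight, which I would state explicitly, is the reduction to a zero-diagonal realizer $B\in\mathcal{CS}_{\rho}$: it is precisely the trace-zero property of $\Lambda'$ that forces the diagonal of $B$ to vanish, and this is what allows a single rank-one Brauer update to simultaneously lift the Perron root from $\rho$ to $\lambda_{1}$ and install the entire prescribed diagonal, all while preserving nonnegativity and the remaining eigenvalues $\lambda_{2},\ldots,\lambda_{n}$. The hypothesis $\lambda_{1}\geq\rho$ is exactly what renders $\mathbf{e}^{T}\mathbf{q}=\sum\omega_{i}$ nonnegative, consistent with the nonnegativity of the $\omega_{i}$.
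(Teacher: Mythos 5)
Your proposal is correct and follows essentially the same route as the paper: take a nonnegative realizer $B\in\mathcal{CS}_{\rho}$ of $\Lambda'$ with zero trace (hence zero diagonal), and apply the Brauer perturbation $B+\mathbf{e}\mathbf{q}^{T}$ with $\mathbf{q}=[\omega_{1},\ldots,\omega_{n}]^{T}$. You merely spell out more explicitly than the paper the necessity direction and the verification that the zero diagonal of $B$ forces the diagonal of the perturbed matrix to be exactly $\omega_{1},\ldots,\omega_{n}$.
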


\begin{proof}
Since $\Lambda ^{\prime }$ is realizable, there exists a nonnegative matrix $%
B\in \mathcal{CS}_{\beta },$ where $\beta =-\sum\limits_{i=2}^{n}\lambda
_{i},$ with spectrum $\Lambda ^{\prime }$ and $tr(B)=0.$ Let $\mathbf{q 
}^{T}=\left[\omega _{1},\ldots ,\omega _{n}\right] .$ Then from (\ref{Bra}%
), $A=B+\mathbf{eq }^{T}$ is a nonnegative matrix with spectrum $%
\Lambda $ and with diagonal entries $\omega _{1},\omega _{2},\ldots ,\omega _{n}$.
\end{proof}

\section{Real centrosymmetric matrices with prescribed spectrum}

In this section we show that a list of complex numbers $\Lambda =\{\lambda
_{1},\lambda _{2},\ldots ,\lambda _{n}\},$ with $\Lambda =\overline{\Lambda }
$, is always the spectrum of a real centrosymmetric matrix \textbf{(}not
necessarily nonnegative\textbf{)}. That is, the real centrosymmetric inverse
eigenvalue problem for a list of conjugate complex numbers has always a
solution.

\begin{theorem}
Let $\Lambda=\{\lambda_{1},\lambda_{2},\ldots,\lambda_{n}\}$ be a list of
complex numbers, with $\Lambda=\overline{\Lambda}$, $n\geq3$. Then $\Lambda$
is the spectrum of an $n\times n$ real centrosymmetric matrix.
\end{theorem}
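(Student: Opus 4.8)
The plan is to run the block diagonalization of Theorem \ref{Cen 2} in reverse. Consider first $n=2m$. For \emph{any} two real $m\times m$ matrices $M_{1}$ and $M_{2}$, put $A=\tfrac{1}{2}(M_{1}+M_{2})$ and $B=\tfrac{1}{2}J(M_{1}-M_{2})$; since $J$ is real, these are real $m\times m$ matrices with $A+JB=M_{1}$ and $A-JB=M_{2}$. The centrosymmetric matrix $C$ built from this $A,B$ as in Theorem \ref{Cen 1}$(i)$ is then real, and by Theorem \ref{Cen 2}$(i)$ its spectrum is precisely $\sigma(M_{1})\cup\sigma(M_{2})$. For $n=2m+1$ I would argue the same way using Theorem \ref{Cen 2}$(ii)$: prescribing a real $(m+1)\times(m+1)$ leading block and a real $m\times m$ trailing block $A-JB$ determines the real data $A,B,\mathbf{x},\mathbf{y},c$, hence a real centrosymmetric $C$ whose spectrum is the union of the spectra of the two blocks. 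Thus it suffices to split $\Lambda$ into two conjugate-closed sublists of the prescribed sizes and to realize each by a real matrix.

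The realization of a single conjugate-closed list is routine: such a list is the spectrum of the real block-diagonal matrix having a $1\times1$ block $(\lambda)$ for each real $\lambda$ in the list and a $2\times2$ block with diagonal entries $a$ and off-diagonal entries $b,-b$ for each conjugate pair $a\pm bi$. This matrix is real and has exactly the required size and spectrum, so the whole construction hinges only on the partition step.

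The crux is therefore the partition of $\Lambda$ into two conjugate-closed sublists $\Lambda_{1},\Lambda_{2}$ of cardinalities $m,m$ (when $n=2m$) or $m+1,m$ (when $n=2m+1$). Write $n=r+2s$, where $r$ counts the real entries and $s$ the conjugate pairs of $\Lambda$. Conjugate-closedness forces each pair $\{\lambda,\overline{\lambda}\}$ to lie entirely in one sublist, so the pairs contribute in even increments and only the $r$ real entries can adjust the two cardinalities; in particular, every sublist of odd size must absorb at least one real entry. I would finish by a short counting argument choosing how many pairs and reals go to each side. For $n$ odd, $r$ is odd (so $r\ge1$) and exactly one of the two target sizes is odd, and the reals can always be apportioned to meet both sizes; for $n\equiv0\pmod4$ both target sizes are even and any admissible apportioning of the (even many) reals works.

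The genuinely delicate case, which I expect to be the main obstacle, is $n\equiv2\pmod4$: then $m$ is odd, both blocks have odd size, and each must receive a real entry, so the split exists exactly when $\Lambda$ has at least two real entries. This condition must be checked, and I expect it holds under the ambient hypotheses, since it is automatic as soon as $\Lambda$ contains even one real eigenvalue (e.g. a Perron eigenvalue): an even conjugate-closed list has $r$ even, so $r\ge1$ already forces $r\ge2$. Pinning down precisely this counting/parity condition, and treating the residual $n\equiv2\pmod4$ case, is where the real work of the proof lies.
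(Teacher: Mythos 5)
Your construction is the same as the paper's: invert the orthogonal block-diagonalization of Theorem \ref{Cen 2}, realize each self-conjugate sublist by a direct sum of $1\times 1$ blocks and $2\times 2$ blocks $\begin{bmatrix}a & -b\\ b & a\end{bmatrix}$, and reduce everything to a counting argument for the partition of $\Lambda$ into two conjugate-closed sublists of sizes $m,m$ or $m+1,m$. You also correctly isolate $n\equiv 2\pmod 4$ as the delicate case. But the way you dispose of it does not work, and in fact cannot: you argue that $r\ge 1$ forces $r\ge 2$ because $r$ is even, which is true, but nothing in the hypotheses forces $r\ge 1$. The theorem is stated for an arbitrary self-conjugate list --- this is the Section~2 result about general real centrosymmetric matrices, with no realizability or Perron hypothesis --- so $r=0$ is allowed. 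And for $r=0$ with $n\equiv 2\pmod 4$ the statement is actually false: take $\Lambda=\{i,-i,2i,-2i,3i,-3i\}$. Any $6\times 6$ real centrosymmetric matrix is orthogonally similar to $(A+JB)\oplus(A-JB)$ with both blocks real of odd order $3$, so it has at least two real eigenvalues; hence no such matrix has spectrum $\Lambda$. The residual case you flag is therefore not a case you failed to treat; it is a genuine counterexample to the statement, and your own parity analysis, pushed one step further, proves it.

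To be fair, the paper's proof has exactly the same hole: its even case opens with ``first we consider $\Lambda$ with only two real numbers,'' and every subsequent generalization keeps $r\ge 2$, so the published argument also silently excludes $r=0$ when $m$ is odd. Your proposal is on par with the paper's proof, but neither establishes the theorem as stated; the correct version for $n\equiv 2\pmod 4$ must additionally require $\Lambda$ to contain at least two real numbers.
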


\begin{proof}
We shall distinguish two cases:\newline
\newline
Case 1. Let $n$ be even. First we consider $\Lambda $ with only two
real numbers, that is, 
\begin{equation*}
\Lambda =\{\lambda _{1},\lambda _{2},z_{1},\overline{z}_{1},\ldots ,z_{m},%
\overline{z}_{m}\},\ \ z_{j}=a_{j}+ib_{j},\ \ a_{j}\in \mathbb{R},\ \ b_{j}>0,\
j=1,\ldots ,m.
\end{equation*}%
If $m$ is even, we take the partition $\Lambda =\Lambda _{1}\cup \Lambda _{2}
$ with 
\begin{equation*}
\Lambda _{k}=\{\lambda _{k1},z_{k1},\overline{z}_{k1},\ldots ,z_{k\frac{m}{2}%
},\overline{z}_{k\frac{m}{2}}\},\ k=1,2,\ \ \lambda _{11}=\lambda _{1},\
\lambda _{21}=\lambda _{2}.
\end{equation*}%
Then, from Theorem \ref{Cen 2} the $(m+1)\times (m+1)$ matrices 
\[
A+JB=\lambda _{1}\bigoplus_{j=1}^{\frac{m}{2}}%
\begin{bmatrix}
a_{1j} & -b_{1j} \\ 
b_{1j} & a_{1j}%
\end{bmatrix} \ \ \text{and} \ \ A-JB=\lambda _{2}\bigoplus_{j=1}^{\frac{m}{2}}
\begin{bmatrix}
a_{2j} & -b_{2j} \\ 
b_{2j} & a_{2j}%
\end{bmatrix} 
\]
have spectrum $\Lambda _{1}$  and $\Lambda_{2}$ respectively.  Then $A=\frac{1}{2}(A+JB+(A-JB))$ and $B=\frac{
1}{2}J(A+JB-(A-JB))$. Therefore 
\begin{equation*}
C=%
\begin{bmatrix}
A & JBJ \\ 
B & JAJ%
\end{bmatrix}%
\end{equation*}%
is real centrosymmetric with spectrum $\Lambda $. \newline
If $m$ is odd, we take the partition $\Lambda =\Lambda _{1}\cup \Lambda _{2}$
with 
\begin{eqnarray*}
\Lambda _{1} &=&\{\lambda _{11},\lambda _{12},z_{11},\overline{z}%
_{11},\ldots ,z_{1\lfloor \frac{m}{2}\rfloor },\overline{z}_{1\lfloor \frac{m%
}{2}\rfloor }\},\ \lambda _{11}=\lambda _{1},\ \lambda _{12}=\lambda _{2} \\
\Lambda _{2} &=&\{z_{21},\overline{z}_{21},\ldots ,z_{2\lceil \frac{m}{2}%
\rceil },\overline{z}_{2\lceil \frac{m}{2}\rceil }\}.
\end{eqnarray*}%
Then, 
\begin{equation*}
A+JB=\lambda _{1}\oplus \lambda _{2}\bigoplus_{j=1}^{\lfloor \frac{m}{2}%
\rfloor}
\begin{bmatrix}
a_{1j} & -b_{1j} \\ 
b_{1j} & a_{1j}
\end{bmatrix}
\text{ \ and}
\end{equation*}
\begin{equation*}
A-JB=\bigoplus_{j=1}^{\lceil \frac{m}{2}\rceil }%
\begin{bmatrix}
a_{2j} & -b_{2j} \\ 
b_{2j} & a_{2j}%
\end{bmatrix}%
\end{equation*}%
have the spectra $\Lambda _{1}$ and $\Lambda _{2},$ respectively and the
proof follows as above. It is clear that if $\Lambda $ has $r$ real numbers, 
$r,m$ even, we take the partition $\Lambda =\Lambda _{1}\cup \Lambda _{2}$
as above, with $\Lambda _{k},$ $k=1,2,$ having $m+\frac{r}{2}$ numbers, $m$
complex numbers and $\frac{r}{2}$ real numbers. If $m$ is odd and $\Lambda $
has $r$ real numbers, $r$ even, then $\Lambda _{1}$ will have $\lfloor \frac{%
m}{2}\rfloor $ complex numbers plus $\frac{r+2}{2}$ real numbers, while $%
\Lambda _{2}$ will have $\lceil \frac{m}{2}\rceil $ complex numbers plus $%
\frac{r-2}{2}$ real numbers. Then the proof follows as before.\\ \ \\
Case 2: Let $n$ be odd. First we consider $\Lambda $ with only one
real number, that is, 
\begin{equation*}
\Lambda =\{\lambda _{1},z_{1},\overline{z}_{1},\ldots ,z_{m},\overline{z}%
_{m}\},\ z_{j}=a_{j}+ib_{j},\ a_{j}\in \mathbb{R},\ b_{j}>0,\ j=1,\ldots
,m.
\end{equation*}%
If $m$ is even we take the partition $\Lambda =\Lambda _{1}\cup \Lambda _{2}$
with 
\begin{eqnarray*}
\Lambda _{1} &=&\{\lambda _{11},z_{11},\overline{z}_{11},\ldots ,z_{1\frac{m%
}{2}},\overline{z}_{1\frac{m}{2}}\},\ \lambda _{11}=\lambda _{1}, \\
\Lambda _{2} &=&\{z_{21},\overline{z}_{21},\ldots ,z_{2\frac{m}{2}},%
\overline{z}_{2\frac{m}{2}}\}.\ 
\end{eqnarray*}%
Then the matrices 
\begin{equation*}
\lambda _{1}\bigoplus_{j=1}^{\frac{m}{2}}%
\begin{bmatrix}
a_{1j} & -b_{1j} \\ 
b_{1j} & a_{1j}%
\end{bmatrix}%
=%
\begin{bmatrix}
\lambda _{1} &  \\ 
& A+JB
\end{bmatrix}%
\text{ \ and}
\end{equation*}%
\begin{equation*}
A-JB=\bigoplus_{j=1}^{\frac{m}{2}}%
\begin{bmatrix}
a_{2j} & -b_{2j} \\ 
b_{2j} & a_{2j}%
\end{bmatrix}%
,
\end{equation*}
have spectrum $\Lambda _{1}$ and $\Lambda _{2},$
respectively. Then $A=\frac{1}{2}(A+JB+(A-JB))$ and $B=\frac{
1}{2}J(A+JB-(A-JB))$. Hence from Theorem \ref{Cen 2}
\begin{equation*}
\begin{bmatrix}
A & \mathbf{0} & JBJ \\ 
\mathbf{0} & \lambda _{1} & \mathbf{0} \\ 
B & \mathbf{0} & JAJ%
\end{bmatrix}%
\end{equation*}%
is real centrosymmetric with spectrum $\Lambda $. \newline
On the other hand, if $m$ is odd, a partition $\Lambda =\Lambda _{1}\cup
\Lambda _{2}$ must be of the form 
\begin{eqnarray*}
\Lambda _{1} &=&\{z_{11},\overline{z}_{11},\ldots ,z_{1\lceil \frac{m}{2}%
\rceil },\overline{z}_{1\lceil \frac{m}{2}\rceil }\},\  \\
\Lambda _{2} &=&\{\lambda _{21},z_{21},\overline{z}_{21},\ldots ,z_{2\lfloor 
\frac{m}{2}\rfloor },\overline{z}_{2\lfloor \frac{m}{2}\rfloor }\},\text{ }%
\lambda _{21}=\lambda _{1},\ \ k=1,2
\end{eqnarray*}%
Then the matrices
\begin{equation*}
\bigoplus_{j=1}^{\lceil \frac{m}{2}\rceil }%
\begin{bmatrix}
a_{1j} & -b_{1j} \\ 
b_{1j} & a_{1j}%
\end{bmatrix}%
=%
\begin{bmatrix}
a_{11} & -b_{11} & 0 & 0  & \cdots  & 0 \\ 
b_{11} &  &  &  & & \\ 
0 &  &  & & &  \\ 0 &  &  & A+JB & &\\ 
\vdots  &  & & &  &  \\ 
0 &  &  &  & 
\end{bmatrix}%
\end{equation*}%
and
\begin{equation*}
A-JB=\lambda _{1}\bigoplus_{j=1}^{\lfloor \frac{m}{2}\rfloor }%
\begin{bmatrix}
a_{2j} & -b_{2j} \\ 
b_{2j} & a_{2j}
\end{bmatrix}
\end{equation*}
have spectrum $\Lambda _{1}$ and $\Lambda _{2}$ respectively. Then $A=\frac{1}{2}(A+JB+(A-JB))$ and $B=\frac{
1}{2}J(A+JB-(A-JB))$. 
Hence  
\begin{equation*}
\begin{bmatrix}
A & \mathbf{x} & JBJ \\ 
\mathbf{\mathbf{y}}^{T} & a_{11} & \mathbf{y}^{T}J \\ 
B & J\mathbf{x} & JAJ%
\end{bmatrix}%
\end{equation*}%
where $\mathbf{x}=\frac{1}{\sqrt{2}}[b_{11},0\cdots ,0]^{T}$ and $\mathbf{y}%
^{T}=\frac{1}{\sqrt{2}}[-b_{11},\cdots ,0]$, is a real centrosymmetric matrix
with spectrum $\Lambda $.\newline
It is easy to see that the list $\Lambda =\{\lambda _{1},z_{1},\overline{z}%
_{1},\ldots ,z_{m},\overline{z}_{m}\}$ can be extended to a list $\Lambda
^{\prime }=\{\lambda _{1},\ldots ,\lambda _{p},z_{1},\overline{z}_{1},\ldots
,z_{m},\overline{z}_{m}\}$ with $p$ real numbers, $p$ odd, which always
admit a partition in two self-conjugate lists $\Lambda _{1}$, $\Lambda _{2}$
such that $\Lambda _{1}$ has $\lceil \frac{2m+p}{2}\rceil $ eigenvalues and $%
\Lambda _{2}$ has the remaining eigenvalues, and a real centrosymmetric
matrix with spectrum $\Lambda ^{\prime }$ can be obtained as above.
\end{proof}

\section{Centrosymmetric nonnegative inverse eigenvalue problem}
In this section we study the NIEP for centrosymmetric matrices. First, we
show that lists of real nonnegative numbers are always realizable for a centrosymmetric nonnegative matrix. Second, we show that lists of complex
numbers of Suleimanova type \cite{Sule} are also realizable for centrosymmetric nonnegative
matrices, except if the list has only one real number and $m$
pairs of conjugated complex numbers, with $m$ being odd. Third, for the general lists, we give  sufficient conditions for the existence of a centrosymmetric nonnegative matrix with prescribed complex spectrum via a perturbation result. Finally,
we study the centrosymmetric realizability of lists of complex numbers of
size $n=4$ with prescribed diagonal entries. \\ \\
We start by showing that a list of real nonnegative numbers $\Lambda
=\{\lambda _{1},\ldots ,\lambda _{n}\}$ is always realizable by a
centrosymmetric matrix. In addition, if $\lambda _{1}$ is simple, $\Lambda $
is realizable by a centrosymmetric positive matrix.
\begin{theorem}
\label{th. real} Let $\Lambda =\{\lambda _{1},\lambda _{2},\ldots ,\lambda
_{n}\}$ be a list of nonnegative real numbers with $\lambda _{1}\geq\lambda
_{2}\geq \cdots \geq \lambda _{n}\geq 0$. Then $\Lambda $ is realizable by
an $n\times n$ centrosymmetric matrix.
\end{theorem}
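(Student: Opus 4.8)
The plan is to read off the required matrix directly from the block characterization of Theorem \ref{Cen 2}, choosing the two diagonal blocks $A+JB$ and $A-JB$ to be \emph{diagonal} matrices that merely carry the prescribed eigenvalues, and then to deduce nonnegativity of $A$ and $B$ from the decreasing order of the $\lambda_{i}$. Equivalently, I would realize $\Lambda$ by a nonnegative centrosymmetric matrix whose only nonzero entries lie on the main diagonal and the anti-diagonal, so that it decouples into the $2\times 2$ centrosymmetric blocks $\begin{bmatrix} x & y \\ y & x\end{bmatrix}$ coupling index $k$ with index $n+1-k$; such a block has eigenvalues $x\pm y$, and with $x=\tfrac{\lambda+\mu}{2}$, $y=\tfrac{\lambda-\mu}{2}$ it realizes any ordered pair $\lambda\geq\mu\geq 0$ with \emph{nonnegative} entries. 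The whole argument then just mirrors the recovery step used in the proof of Theorem~2.1, the only new ingredient being the sign control coming from the ordering hypothesis.

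For $n=2m$ I would pair the eigenvalues consecutively and set $P=\mathrm{diag}(\lambda_{1},\lambda_{3},\ldots,\lambda_{2m-1})$ and $Q=\mathrm{diag}(\lambda_{2},\lambda_{4},\ldots,\lambda_{2m})$. Defining, as in Theorem \ref{Cen 2}, $A=\tfrac{1}{2}(P+Q)$, $B=\tfrac{1}{2}J(P-Q)$ and
\begin{equation*}
C=\begin{bmatrix} A & JBJ \\ B & JAJ\end{bmatrix},
\end{equation*}
Theorem \ref{Cen 2}$(i)$ gives that $C$ is orthogonally similar to $(A+JB)\oplus(A-JB)=P\oplus Q$, so $\mathrm{spec}(C)=\mathrm{spec}(P)\cup\mathrm{spec}(Q)=\Lambda$. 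Here $A=\mathrm{diag}\!\big(\tfrac{\lambda_{2k-1}+\lambda_{2k}}{2}\big)\geq 0$, while $P-Q=\mathrm{diag}(\lambda_{2k-1}-\lambda_{2k})\geq 0$ because the list is decreasing, so $B$, being a row permutation of a nonnegative diagonal matrix, is nonnegative; hence $C$ (and with it $JAJ$, $JBJ$) is entrywise nonnegative.

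For $n=2m+1$ I would send the last eigenvalue to the central position and pair the remaining $2m$ eigenvalues exactly as above. In the decomposition of Theorem \ref{Cen 2}$(ii)$ take $\mathbf{x}=\mathbf{y}=\mathbf{0}$, $c=\lambda_{2m+1}$, $A+JB=\mathrm{diag}(\lambda_{1},\lambda_{3},\ldots,\lambda_{2m-1})$ and $A-JB=\mathrm{diag}(\lambda_{2},\lambda_{4},\ldots,\lambda_{2m})$; then $C$ is orthogonally similar to $[c]\oplus(A+JB)\oplus(A-JB)$, so $\mathrm{spec}(C)=\{\lambda_{2m+1}\}\cup\{\lambda_{\mathrm{odd}}\}\cup\{\lambda_{\mathrm{even}}\}=\Lambda$, and nonnegativity of $A$, $B$, $c$ follows as before.

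The only point that needs any care is nonnegativity of $A$ and $B$, and this is exactly what the hypothesis $\lambda_{1}\geq\cdots\geq\lambda_{n}\geq 0$ secures, through the two inequalities $\lambda_{2k-1}+\lambda_{2k}\geq 0$ and $\lambda_{2k-1}-\lambda_{2k}\geq 0$ for each consecutive pair; there is no genuine spectral obstruction, in contrast with the Suleimanova-type lists treated later. The construction produces a sparse matrix rather than a positive one, so the strengthening ``positive when $\lambda_{1}$ is simple'' alluded to above would be handled as a separate step: starting from this $C$, whose Perron eigenvector is symmetric (supported on the pair $\{1,n\}$), one adds a suitable symmetric nonnegative perturbation in the spirit of (\ref{Bra}) to fill in the zeros while keeping the matrix centrosymmetric; I would carry out that refinement independently, as it is not required for the statement as given.
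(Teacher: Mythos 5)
Your proposal is correct and follows essentially the same route as the paper: both choose $A+JB$ and $A-JB$ to be diagonal matrices carrying the eigenvalues, recover $A$ and $B$, and deduce their nonnegativity from the decreasing order of the list (the paper pairs $\lambda_k$ with $\lambda_{m+k}$ and places the middle eigenvalue at the center in the odd case, while you pair consecutive eigenvalues and place the smallest at the center, but this is an immaterial variation). No gaps.
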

\begin{proof}
For even $n=2m,$ we take the partition $\Lambda =\Lambda _{1}\cup \Lambda
_{2} $, where 
\begin{equation*}
\Lambda _{1}=\{\lambda _{1},\ldots ,\lambda _{m}\},\ \ \Lambda
_{2}=\{\lambda _{m+1},\ldots ,\lambda _{n}\}.
\end{equation*}%
Then for $A+JB=diag\{\lambda _{1},\ldots ,\lambda _{m}\}$ and $%
A-JB=diag\{\lambda _{m+1},\ldots ,\lambda _{n}\}$ we have that 
\begin{equation*}
A=\frac{1}{2}diag\{\lambda _{1}+\lambda _{m+1},\ldots ,\lambda _{m}+\lambda
_{n}\},\ \ B=\frac{1}{2}Jdiag\{\lambda _{1}-\lambda _{m+1},\ldots ,\lambda
_{m}-\lambda _{n}\},
\end{equation*}%
are nonnegative matrices, and a solution matrix is of the form 
\begin{equation*}
C=%
\begin{bmatrix}
A & JBJ \\ 
B & JAJ%
\end{bmatrix}%
.
\end{equation*}%
For odd $n=2m+1,$ we take the partition $\Lambda =\Lambda _{1}\cup \Lambda
_{2}$, where 
\begin{equation*}
\Lambda _{1}=\{\lambda _{1},\ldots ,\lambda _{\lceil \frac{n}{2}\rceil }\},\
\ \Lambda _{2}=\{\lambda _{\lceil \frac{n}{2}\rceil +1},\ldots ,\lambda
_{n}\}.
\end{equation*}%
Then for 
\begin{equation*}
\begin{bmatrix}
A+JB & \sqrt{2}\mathbf{x} \\ 
\sqrt{2}\mathbf{y}^{T} & c%
\end{bmatrix}%
=diag\{\lambda _{1},\ldots ,\lambda _{\lceil \frac{n}{2}\rceil }\}
\end{equation*}%
and $A-JB=diag\{\lambda _{\lceil \frac{n}{2}\rceil +1},\ldots ,\lambda
_{n}\},$ we have $A+JB=diag\{\lambda _{1},\ldots ,\lambda _{\lfloor \frac{n}{%
2}\rfloor }\}$, $c=\lambda _{\lceil \frac{n}{2}\rceil }$. Therefore%
\begin{eqnarray*}
A &=&\frac{1}{2}diag\{\lambda _{1}+\lambda _{\lceil \frac{n}{2}\rceil
+1},\ldots ,\lambda _{\lfloor \frac{n}{2}\rfloor }+\lambda _{n}\}, \\
B &=&\frac{1}{2}Jdiag\{\lambda _{1}-\lambda _{\lceil \frac{n}{2}\rceil
+1},\ldots ,\lambda _{\lfloor \frac{n}{2}\rfloor }-\lambda _{n}\},
\end{eqnarray*}%
are nonnegative matrices and a solution matrix is 
\begin{equation*}
C=%
\begin{bmatrix}
A & \mathbf{0} & JBJ \\ 
\mathbf{0} & \lambda _{\lceil \frac{n}{2}\rceil } & \mathbf{0} \\ 
B & \mathbf{0} & JAJ%
\end{bmatrix}%
.
\end{equation*}
\end{proof}\ \\
In \cite{Perfect} Perfect introduces the $n\times n$ matrix 
\begin{equation}
P=%
\begin{bmatrix}
1 & 1 & \cdots & 1 & 1 \\ 
1 & 1 & \cdots & 1 & -1 \\ 
1 & 1 & \cdots & -1 & 0 \\ 
\vdots & \vdots & \cdots & \vdots & \vdots \\ 
1 & -1 & \cdots & 0 & 0%
\end{bmatrix}%
,  \label{Per1}
\end{equation}%
and she proves that if $D=diag\{\lambda _{1},\lambda _{2},\ldots ,\lambda
_{n}\},$ with $\lambda _{1}>\lambda _{2}\geq \cdots \geq \lambda _{n}\geq 0,$
then $A=PDP^{-1}$ is a positive matrix in $\mathcal{CS}_{\lambda _{1}}.$ As
a consequence, we have the following result, which gives a very simple way
to compute a centrosymmetric positive matrix with prescribed nonnegative spectrum.
\begin{theorem}
Let $\Lambda =\{\lambda _{1},\lambda _{2},\ldots ,\lambda _{n}\}$ be a list
of nonnegative real numbers with $\lambda _{1}>\lambda _{2}\geq \cdots \geq
\lambda _{n}\geq 0$. Then $\Lambda $ is realizable by an $n\times n$
centrosymmetric positive matrix.
\end{theorem}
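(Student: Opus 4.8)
The plan is to use the block reduction of Theorem \ref{Cen 2} to turn the construction of an $n\times n$ centrosymmetric positive matrix with spectrum $\Lambda$ into the construction of two smaller matrices whose spectra are the two halves of $\Lambda$, realizing the half that carries the Perron root by Perfect's positive matrix $PDP^{-1}$ from (\ref{Per1}) and the other half by a diagonal matrix. The gain is that any matrix written in the block form of Theorem \ref{Cen 1} is automatically centrosymmetric, so I only have to control the spectra of $A+JB$ and $A-JB$ and the entrywise signs of $A$ and $B$.

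For even $n=2m$ I would partition $\Lambda=\Lambda_{1}\cup\Lambda_{2}$ with $\Lambda_{1}=\{\lambda_{1},\ldots,\lambda_{m}\}$ and $\Lambda_{2}=\{\lambda_{m+1},\ldots,\lambda_{n}\}$, so that $\Lambda_{1}$ keeps the Perron eigenvalue and still satisfies $\lambda_{1}>\lambda_{2}\geq\cdots\geq\lambda_{m}\geq0$. Let $G=PDP^{-1}$ be the positive matrix in $\mathcal{CS}_{\lambda_{1}}$ produced by Perfect's result for $\Lambda_{1}$, and let $H=\mathrm{diag}\{\lambda_{m+1},\ldots,\lambda_{n}\}$. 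Reading $G$ and $H$ as $A+JB$ and $A-JB$, I set $A=\tfrac{1}{2}(G+H)$ and $B=\tfrac{1}{2}J(G-H)$; then $A+JB=G$ and $A-JB=H$ (using $J^{2}=I$), so by Theorem \ref{Cen 2}$(i)$ the matrix $C=\begin{bmatrix} A & JBJ \\ B & JAJ \end{bmatrix}$ is centrosymmetric with spectrum $\Lambda_{1}\cup\Lambda_{2}=\Lambda$. For odd $n=2m+1$ I would instead realize the $(m+1)\times(m+1)$ block $\begin{bmatrix} c & \sqrt{2}\,\mathbf{y}^{T} \\ \sqrt{2}\,\mathbf{x} & A+JB \end{bmatrix}$ of Theorem \ref{Cen 2}$(ii)$ by Perfect's positive matrix for $\Lambda_{1}=\{\lambda_{1},\ldots,\lambda_{m+1}\}$, read off $c$, $\mathbf{x}$, $\mathbf{y}$ and $A+JB$ from its first entry, first column, first row and trailing $m\times m$ block, and take $A-JB=H=\mathrm{diag}\{\lambda_{m+2},\ldots,\lambda_{n}\}$; the positivity of the whole Perfect block makes $c,\mathbf{x},\mathbf{y}>0$ automatically.

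The crux is the positivity of $C$, which is exactly the entrywise positivity of $A=\tfrac{1}{2}(G+H)$ and $B=\tfrac{1}{2}J(G-H)$. Because $J$ merely permutes rows, because $G$ is entrywise positive and because $H$ is diagonal, the off-diagonal entries of $G+H$ and of $G-H$ coincide with those of $G$ and are positive, while their diagonal entries are $G_{ii}+\lambda_{m+i}$ and $G_{ii}-\lambda_{m+i}$. Everything therefore collapses to the diagonal-dominance inequality $G_{ii}>\lambda_{m+1}\geq\max\Lambda_{2}$ for each $i$, and I expect this to be the main obstacle: it does not follow from $G$ being merely positive but must be read off from the explicit form of $PDP^{-1}$. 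The computation I would carry out is that $G_{ii}=\sum_{k}P_{ik}(P^{-1})_{ki}\,\lambda_{k}$ expresses each diagonal entry as a combination of $\lambda_{1},\ldots,\lambda_{m}$ whose coefficients sum to $(PP^{-1})_{ii}=1$ and carry a strictly positive weight on $\lambda_{1}$; since every $\lambda_{k}\geq\lambda_{m}$ and $\lambda_{1}>\lambda_{m}$, this forces $G_{ii}>\lambda_{m}\geq\lambda_{m+1}$. Verifying the nonnegativity of those weights (and the analogous bound for the trailing block in the odd case) is the delicate point; once it is in hand, $A,B>0$ and $C$ is the desired centrosymmetric positive realization, with the case $m=1$ handled directly by $B=\tfrac{1}{2}(\lambda_{1}-\lambda_{2})>0$.
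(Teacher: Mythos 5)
Your proposal follows essentially the same route as the paper: the same partition $\Lambda=\Lambda_{1}\cup\Lambda_{2}$, the same choice $A+JB=PDP^{-1}$ and $A-JB$ diagonal, and the same reduction of positivity of $C$ to the diagonal-dominance inequality $d_{jj}>\lambda_{m+j}$. The one step you explicitly leave unverified --- the nonnegativity of the weights in $d_{jj}=\sum_{k}P_{jk}(P^{-1})_{kj}\lambda_{k}$ --- is precisely where the paper does its work: it writes out the diagonal entries of $PDP^{-1}$ explicitly, e.g.\ $d_{11}=\tfrac{1}{2^{m-1}}\lambda_{1}+\tfrac{1}{2^{m-1}}\lambda_{2}+\tfrac{1}{2^{m-2}}\lambda_{3}+\cdots+\tfrac{1}{2}\lambda_{m}=d_{22}$ down to $d_{mm}=\tfrac{1}{2}\lambda_{1}+\tfrac{1}{2}\lambda_{2}$, and these are exactly of the form you anticipated: nonnegative coefficients summing to $1$ with strictly positive weight on $\lambda_{1}$, whence $d_{jj}>\lambda_{m+1}\geq\lambda_{m+j}$. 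So your expectation is correct, but as written the proof is incomplete at its crux; to close it you must either reproduce that explicit computation or otherwise establish $P_{jk}(P^{-1})_{kj}\geq 0$. The only other (harmless) deviation is in the odd case, where you place $c$ in the $(1,1)$ corner of the Perfect block rather than the $(m{+}1,m{+}1)$ corner as the paper does; both splittings appear in Theorem \ref{Cen 2}$(ii)$ and the same diagonal bound covers either choice.
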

\begin{proof}
For even $n=2m,$ we take the partition \ $\Lambda =\Lambda _{1}\cup \Lambda
_{2}$, with 
\begin{equation*}
\Lambda _{1}=\{\lambda _{1},\ldots ,\lambda _{m}\},\ \ \Lambda
_{2}=\{\lambda _{m+1},\ldots ,\lambda _{n}\}.
\end{equation*}%
We set $A+JB=PDP^{-1}>0$, where $D=diag\{\lambda _{1},\ldots ,\lambda
_{m}\}, $ $P$ is the Perfect matrix in \eqref{Per1}, and $A-JB=diag\{\lambda
_{m+1},\ldots ,\lambda _{n}\}$. Then 
\begin{equation*}
A=\frac{1}{2}(PDP^{-1}+diag\{\lambda _{m+1},\ldots ,\lambda _{n}\})>0,
\end{equation*}%
and 
\begin{equation*}
B=\frac{1}{2}J(PDP^{-1}-diag\{\lambda _{m+1},\ldots ,\lambda _{n}\}).
\end{equation*}%
Therefore, 
\begin{equation}
C=%
\begin{bmatrix}
A & JBJ \\ 
B & JAJ%
\end{bmatrix}
\label{c}
\end{equation}%
is a centrosymmetric matrix with spectrum $\Lambda $. It remains to prove
that $C$ is positive. It is enough to show that $B$ is positive. In fact, if 
$d_{jj}$, $j=1,2,\ldots ,m,$ are the diagonal entries of $PDP^{-1},$ we must
show that 
\begin{equation*}
d_{jj}>\lambda _{m+j},\ \ \ \text{for all}\ \ j=1,2,\ldots ,m.
\end{equation*}%
It is easy to see that the diagonal entries of the matrix $PDP^{-1}$ are
given by: 
\begin{align*}
d_{11}& =\frac{1}{2^{m-1}}\lambda _{1}+\frac{1}{2^{m-1}}\lambda _{2}+\frac{1%
}{2^{m-2}}\lambda _{3}+\cdots +\frac{1}{2^{2}}\lambda _{m-1}+\frac{1}{2}%
\lambda _{m}=d_{22} \\
d_{33}& =\frac{1}{2^{m-2}}\lambda _{1}+\frac{1}{2^{m-2}}\lambda _{2}+\frac{1%
}{2^{m-3}}\lambda _{3}+\cdots +\frac{1}{2^{2}}\lambda _{m-2}+\frac{1}{2}%
\lambda _{m-1} \\
& \vdots \\
& \vdots \\
d_{m-1,m-1}& =\frac{1}{2^{2}}\lambda _{1}+\frac{1}{2^{2}}\lambda _{2}+\frac{1%
}{2}\lambda _{3} \\
d_{mm}& =\frac{1}{2}\lambda _{1}+\frac{1}{2}\lambda _{2}.
\end{align*}%
For $j=1,$ $d_{11}>\lambda _{m+1}.$ In fact: since%
\begin{equation*}
\frac{1}{2^{m-1}}\lambda _{1}>\frac{1}{2^{m-1}}\lambda _{m+1},\ \frac{1}{%
2^{m-1}}\lambda _{2}\geq \frac{1}{2^{m-1}}\lambda _{m+1},\ \ldots ,\ \frac{1%
}{2}\lambda _{m}\geq \frac{1}{2}\lambda _{m+1},
\end{equation*}%
then by adding the inequalities we have%
\begin{align*}
d_{11}& >(\frac{1}{2^{m-1}}+\frac{1}{2^{m-1}}+\frac{1}{2^{m-2}}+\cdots +%
\frac{1}{2^{2}}+\frac{1}{2})\lambda _{m+1} \\
& =\frac{1+1+2+2^{2}+\cdots +2^{m-3}+2^{m-2}}{2^{m-1}}\lambda _{m+1} \\
& =\frac{1+(2^{m-1}-1)}{2^{m-1}}\lambda _{m+1} \\
& =\lambda _{m+1}.
\end{align*}%
Since $d_{11}=d_{22}$ and $\lambda _{m+1}\geq \lambda _{m+2}$, then $%
d_{22}>\lambda _{m+2}$.\newline
By proceeding in the same way we have 
$d_{jj}>\lambda _{m+j},$ $j=3,4,\ldots ,m$.
Therefore $B>0$, and the matrix $C$ in \eqref{c} is centrosymmetric positive. \newline
If $n=2m+1$ is odd, then we take the partition $\Lambda =\Lambda _{1}\cup
\Lambda _{2}$ with%
\begin{equation*}
\Lambda _{1}=\{\lambda _{1},\ldots ,\lambda _{\lceil \frac{n}{2}\rceil }\},\
\ \ \Lambda _{2}=\{\lambda _{\lceil \frac{n}{2}\rceil +1},\ldots ,\lambda
_{n}\}.
\end{equation*}%
Then for $%
\begin{bmatrix}
A+JB & \sqrt{2}\mathbf{x} \\ 
\sqrt{2}\mathbf{y}^{T} & c%
\end{bmatrix}%
=PDP^{-1}=%
\begin{bmatrix}
A_{11} & \mathbf{a} \\ 
\mathbf{b}^{T} & a_{m+1,m+1}%
\end{bmatrix}%
>0$, and\newline
$A-JB=diag\{\lambda _{\lceil \frac{n}{2}\rceil +1},\ldots ,\lambda _{n}\},$
we have 
\begin{equation*}
A=\frac{1}{2}(A_{11}+diag\{\lambda _{\lceil \frac{n}{2}\rceil +1},\ldots
,\lambda _{n}\})>0,
\end{equation*}%
\begin{equation*}
B=\frac{1}{2}J(A_{11}-diag\{\lambda _{\lceil \frac{n}{2}\rceil +1},\ldots
,\lambda _{n}\}).
\end{equation*}%
$B$ is positive from the same argument as in the even case. Therefore the
matrix 
\begin{equation*}
C=%
\begin{bmatrix}
A & \frac{1}{\sqrt{2}}\mathbf{a} & JBJ \\ 
\frac{1}{%
\sqrt{2}}\mathbf{b}^{T} & a_{m+1,m+1} & \frac{1}{%
\sqrt{2}}\mathbf{b}^{T}J \\ 
B & J\frac{1}{\sqrt{2}}\mathbf{a} & JAJ%
\end{bmatrix}%
,
\end{equation*}
is centrosymmetric positive with spectrum $\Lambda
.$
\end{proof}\ \\ \\
Next we shall see an anomalous case. That is, we shall prove that if a list
realizable $\Lambda $ has only one real positive number and $m$ pairs of
conjugated complex numbers, with $m$ being odd, then $\Lambda $ cannot be
the spectrum of a centrosymmetric nonnegative matrix. 
\begin{theorem}
\label{NO realizable} Let $\Lambda =\{\lambda _{1},z_{1},\overline{z}%
_{1},\ldots ,z_{m},\overline{z}_{m}\}$ be a realizable list of complex
numbers, with odd $m$, $z_{j}=a_{j}+ib_{j}$ $j=1,\ldots,m$, $a\in \mathbb{R}$, $b_{j}>0$. Then $\Lambda $ cannot be the spectrum of a centrosymmetric nonnegative matrix.
\end{theorem}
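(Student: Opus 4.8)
The plan is to argue by contradiction, reducing the hypothetical realizing matrix through the block decomposition of Theorem \ref{Cen 2}(ii) and then invoking a parity argument on the odd-sized block. Suppose, for contradiction, that $C$ is an $n\times n$ centrosymmetric nonnegative matrix with spectrum $\Lambda$, where $n=2m+1$ is odd. By Theorem \ref{Cen 1}(ii) we may write $C$ in its block form with $m\times m$ real blocks $A,B$, vectors $\mathbf{x},\mathbf{y}$, and scalar $c$, and by Theorem \ref{Cen 2}(ii) $C$ is orthogonally similar to
\[
\begin{bmatrix}
c & \sqrt{2}\mathbf{y}^{T} & \\
\sqrt{2}\mathbf{x} & A+JB & \\
& & A-JB
\end{bmatrix}.
\]
Since orthogonal similarity preserves eigenvalues with multiplicities and this matrix is block diagonal, the multiset $\Lambda$ is the union (counting multiplicities) of the spectrum of the $(m+1)\times(m+1)$ block $M_{1}=\begin{bmatrix} c & \sqrt{2}\mathbf{y}^{T}\\ \sqrt{2}\mathbf{x} & A+JB\end{bmatrix}$ and the spectrum of the $m\times m$ block $M_{2}=A-JB$.

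Next I would use the parity of $m$. As $C$ is real, the blocks $A$ and $B$ are real, so $M_{2}=A-JB$ is a real matrix of odd order $m$; its characteristic polynomial has odd degree and real coefficients, hence at least one real root, so $M_{2}$ has a real eigenvalue. That eigenvalue lies in $\Lambda$, so it is one of $\lambda_{1},z_{1},\overline{z}_{1},\ldots,z_{m},\overline{z}_{m}$. But $b_{j}>0$ for every $j$ makes each $z_{j}$ and $\overline{z}_{j}$ non-real, so the only real member of $\Lambda$ is the Perron eigenvalue $\lambda_{1}$, which occurs in $\Lambda$ with multiplicity one.

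To close the argument I would pin down $\lambda_{1}$. By the last assertion of Theorem \ref{Cen 2}(ii), $\lambda_{1}$ is the Perron eigenvalue of $M_{1}$, so $\lambda_{1}$ is an eigenvalue of $M_{1}$. Since $\lambda_{1}$ appears in $\Lambda$ with multiplicity one and $\Lambda$ is the union with multiplicities of the spectra of $M_{1}$ and $M_{2}$, it follows that $\lambda_{1}$ is not an eigenvalue of $M_{2}$. Consequently the spectrum of $M_{2}$ consists entirely of non-real numbers, contradicting the existence of a real eigenvalue of $M_{2}$ established above. Therefore no centrosymmetric nonnegative matrix can have spectrum $\Lambda$.

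The only delicate point I anticipate is the bookkeeping that forces $\lambda_{1}$ into the block $M_{1}$ and forbids it from $M_{2}$; this is exactly where the multiplicity-one property of $\lambda_{1}$ in $\Lambda$ is essential, for otherwise the real eigenvalue demanded by the odd order of $M_{2}$ could merely be a second copy of $\lambda_{1}$ and no contradiction would arise. Everything else rests on the elementary fact that a real matrix of odd order must possess a real eigenvalue.
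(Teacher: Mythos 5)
Your proof is correct and takes essentially the same route as the paper: both decompose $C$ via Theorem \ref{Cen 2}(ii) into the $(m+1)\times(m+1)$ Perron block and the real block $A-JB$, and both conclude by a parity argument, the paper noting that the even-order Perron block would have to carry an odd number $m$ of non-real eigenvalues, while you equivalently note that the odd-order real block $A-JB$ would have to carry none. If anything, your write-up is more careful, since you explicitly use the multiplicity one of $\lambda_{1}$ in $\Lambda$ to exclude it from the spectrum of $A-JB$.
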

\begin{proof}
Suppose that $\Lambda $ is the spectrum of a centrosymmetric nonnegative matrix $C$ of order $2m+1.$ Then $C$ is of the form 
\begin{equation*}
C=
\begin{bmatrix}
A & \mathbf{x} & JBJ \\ 
\mathbf{y}^{T} & c & \mathbf{y}^{T}J \\ 
B & J\mathbf{x} & JAJ%
\end{bmatrix},
\end{equation*}
where $A$ and $B$ are $m\times m$ nonnegative matrices, $\mathbf{x}$, $%
\mathbf{y}$ are nonnegative $m\times 1$ matrices, and $c$ is a nonnegative
real number. From Theorem \ref{Cen 2} $C$ is orthogonally similar to the
matrix 
\begin{equation*}
\begin{bmatrix}
A-JB &  &  \\ 
& c & \sqrt{2}\mathbf{y}^{T} \\ 
& \sqrt{2}\mathbf{x} & A+JB
\end{bmatrix},
\end{equation*}
where $
\begin{bmatrix}
c & \sqrt{2}\mathbf{y}^{T} \\ 
\sqrt{2}\mathbf{x} & A+JB%
\end{bmatrix}%
$ is an $(m+1)\times (m+1)$ nonnegative matrix with $m+1$ eigenvalues of $C$
including the Perron eigenvalue. That is, $C$ has $m$ (an odd number)
complex eigenvalues, which is a contradiction.
\end{proof}\ \\ \ \\
Now we consider the centrosymmetric realizability of lists of Suleimanova
type. We start with the following simple result:
\begin{lemma}
\label{cent} Let $\Lambda =\{\lambda _{1},\lambda _{2},\ldots ,\lambda_{n}\} $ be the spectrum of a centrosymmetric nonnegative matrix and let $\epsilon >0$. Then $\{\lambda _{1}+\epsilon ,\lambda _{2},\ldots ,\lambda
_{n}\}$ is also the spectrum of a centrosymmetric nonnegative matrix.
\end{lemma}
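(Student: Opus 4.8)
The plan is to obtain the shifted list by a single rank-one Brauer perturbation of the given realizing matrix, carried out along its Perron eigenvector. The key observation is that Lemma~\ref{eigen} provides not merely a Perron eigenvector but a \emph{symmetric} nonnegative one (satisfying $J\mathbf{x}=\mathbf{x}$), and it is precisely this symmetry that will keep the perturbed matrix centrosymmetric.

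Concretely, I would let $C$ be a centrosymmetric nonnegative matrix with spectrum $\Lambda$ and Perron eigenvalue $\lambda_1$, and invoke Lemma~\ref{eigen} to fix a nonnegative eigenvector $\mathbf{x}$ with $C\mathbf{x}=\lambda_1\mathbf{x}$ and $J\mathbf{x}=\mathbf{x}$. Since $\mathbf{x}\neq\mathbf{0}$ we have $\mathbf{x}^T\mathbf{x}>0$, so I can set $\mathbf{q}=\dfrac{\epsilon}{\mathbf{x}^T\mathbf{x}}\,\mathbf{x}$ and define
\[
C'=C+\mathbf{x}\mathbf{q}^T=C+\frac{\epsilon}{\mathbf{x}^T\mathbf{x}}\,\mathbf{x}\mathbf{x}^T .
\]
It then remains to verify three properties. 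For the spectrum, Brauer's theorem (\ref{Bra}) with $\mathbf{v}=\mathbf{x}$ and $k=1$ shows that $C'$ has eigenvalues $\lambda_1+\mathbf{x}^T\mathbf{q}=\lambda_1+\epsilon,\ \lambda_2,\ldots,\lambda_n$. For nonnegativity, the rank-one term is nonnegative because $\epsilon>0$ and $\mathbf{x}\geq\mathbf{0}$, while $C\geq 0$. For centrosymmetry, $J(\mathbf{x}\mathbf{x}^T)J=(J\mathbf{x})(J\mathbf{x})^T=\mathbf{x}\mathbf{x}^T$ by $J\mathbf{x}=\mathbf{x}$, so the rank-one term is centrosymmetric, and Lemma~\ref{Le1} then gives that $C'$ is centrosymmetric.

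I expect no serious obstacle here: the entire argument hinges on the availability of a symmetric Perron eigenvector. With a non-symmetric $\mathbf{x}$ the perturbation would only satisfy $J(\mathbf{x}\mathbf{q}^T)J=(J\mathbf{x})(J\mathbf{q})^T$, which need not equal $\mathbf{x}\mathbf{q}^T$, so centrosymmetry would be lost; taking $\mathbf{q}$ parallel to $\mathbf{x}$ is what simultaneously secures the eigenvalue shift, nonnegativity, and the centrosymmetric structure. The only points to confirm are the nonvanishing of $\mathbf{x}^T\mathbf{x}$ and the correct bookkeeping in Brauer's theorem, both of which are routine.
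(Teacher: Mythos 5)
Your proof is correct and follows essentially the same route as the paper: both take the symmetric nonnegative Perron eigenvector from Lemma~\ref{eigen} and apply the Brauer perturbation $C+\frac{\epsilon}{\mathbf{x}^{T}\mathbf{x}}\mathbf{x}\mathbf{x}^{T}$. You simply spell out the verification of nonnegativity and centrosymmetry in more detail than the paper does.
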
\ 
\begin{proof}
Let $C$ be a centrosymmetric nonnegative matrix with spectrum $\Lambda $.
Then from Lemma \ref{eigen}, there exists $\mathbf{v}\geq 0$, $C\mathbf{v}%
=\lambda _{1}\mathbf{v}$, with $J\mathbf{v}=\mathbf{v,}$ and from Brauer's
Theorem (see \eqref{Bra}) the matrix 
\begin{equation*}
C+\frac{\epsilon }{\mathbf{v}^{T}\mathbf{v}}\mathbf{v}\mathbf{v}^{T},
\end{equation*}%
is centrosymmetric nonnegative with spectrum $\{\lambda _{1}+\epsilon
,\lambda _{2},\ldots ,\lambda _{n}\}$.
\end{proof}\ \\ \\
It is well known that lists of complex numbers $\Lambda =\{\lambda
_{1},\lambda _{2},\ldots ,\lambda _{n}\}$ of Suleimanova type are realizable
by a nonnegative matrix if only if $\sum_{i=1}^{n}\lambda _{i}\geq 0$. We
prove that a list of this type is in particular realizable by a centrosymmetric nonnegative
matrix, except if the list is as in the Theorem \ref{NO
realizable}, that is, it has only one real positive number and $m$ pairs of
conjugated complex numbers, with $m$ being odd. \\ \\
The following result show that any realizable list of complex numbers of Suleimanova
type, with two real numbers and $m$ pairs of complex numbers is always the
spectrum of a centrosymmetric nonnegative matrix of order $n=2m+2.$ As a
consequence, all realizable lists of complex numbers of Suleimanova type, with an even
number of elements, are always realizable by centrosymmetric matrices.
\begin{lemma}
\label{2real}Let $\Lambda =\{\lambda _{1},\lambda _{2},z_{1},\overline{z}%
_{1},\ldots ,z_{m},\overline{z}_{m}\}$, with $\lambda _{2},z_{j},\overline{z}%
_{j}\in \mathcal{F}$, $z_{j}=a_{j}+ib_{j}$, $a_{j}\in \mathbb{R}$, $b_{j}>0$, $j=1,\ldots ,m,$ be a realizable list of complex
numbers. Then $\Lambda $ is the spectrum of a  centrosymmetric nonnegative
matrix.
\end{lemma}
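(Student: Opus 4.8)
The plan is to run the Cantoni--Butler decomposition (Theorem \ref{Cen 2}(i)) in reverse. I will produce two real $(m+1)\times(m+1)$ matrices $P$ and $Q$, declare $P=A+JB$ and $Q=A-JB$, recover $A=\tfrac12(P+Q)$ and $B=\tfrac12 J(P-Q)$, and assemble $C=\begin{bmatrix}A&JBJ\\ B&JAJ\end{bmatrix}$. Since $J^{2}=I$, a direct check gives $A+JB=P$ and $A-JB=Q$, so by Theorem \ref{Cen 2}(i) the matrix $C$ is centrosymmetric with spectrum $\sigma(P)\cup\sigma(Q)$. Everything then reduces to two tasks: splitting $\Lambda$ into $\sigma(P)$ and $\sigma(Q)$, and forcing $A,B\ge 0$, which is exactly the single entrywise condition $P\ge|Q|$.

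First I would split $\Lambda=\Lambda_{1}\cup\Lambda_{2}$ into self-conjugate lists of size $m+1$, always keeping the Perron value $\lambda_{1}$ in $\Lambda_{1}$. If $m$ is even I place $\lambda_{1}$ with $m/2$ conjugate pairs in $\Lambda_{1}$ and $\lambda_{2}$ with the remaining $m/2$ pairs in $\Lambda_{2}$; if $m$ is odd I place both reals $\lambda_{1},\lambda_{2}$ and $(m-1)/2$ pairs in $\Lambda_{1}$ and the remaining $(m+1)/2$ pairs in $\Lambda_{2}$. I would realize $\Lambda_{2}$ by the block-diagonal real matrix $Q$ built from the $2\times2$ blocks $\begin{bmatrix}a_{j}&-b_{j}\\ b_{j}&a_{j}\end{bmatrix}$ (eigenvalues $z_{j},\overline{z}_{j}$) and, when present, the $1\times1$ block $[\lambda_{2}]$. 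Because every $z_{j}\in\mathcal F$ satisfies $|a_{j}|\ge b_{j}$ and $\lambda_{2}\le0$, the largest absolute value in each column of $Q$ is $|a_{j}|$ (resp. $|\lambda_{2}|$), and the total of these column maxima equals $-\sum\Lambda_{2}$ (the sum of the entries of $\Lambda_{2}$, which is $\le0$).

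For $P$ I would use the construction underlying Lemma \ref{SJC}. The non-Perron part of $\Lambda_{1}$ lies in $\mathcal F$, so $\Lambda_{1}$ is a realizable Suleimanova list; indeed $\sum\Lambda_{1}=\sum\Lambda-\sum\Lambda_{2}\ge\sum\Lambda\ge0$ since $\sum\Lambda_{2}\le0$ and $\Lambda$ is realizable. Taking a nonnegative $B_{1}\in\mathcal{CS}$ of trace zero realizing the associated trace-zero list $\Lambda_{1}'$ of Lemma \ref{SJC}, and setting $P=B_{1}+\mathbf{e}\,\omega^{T}$, Brauer's theorem (\ref{Bra}) shows $\sigma(P)=\Lambda_{1}$ for any nonnegative $\omega$ with $\sum_{j}\omega_{j}=\sum\Lambda_{1}$; moreover every entry of column $j$ of $P$ is at least $\omega_{j}$. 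Hence $P\ge|Q|$ will follow as soon as I pick $\omega_{j}$ to dominate the column maxima of $Q$ found above.

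The main point, and the only place the full hypotheses enter, is that this choice of $\omega$ is feasible. The sum of the required lower bounds on the $\omega_{j}$ is exactly $-\sum\Lambda_{2}$, while the available budget is $\sum_{j}\omega_{j}=\sum\Lambda_{1}$; since $\sum\Lambda_{1}\ge-\sum\Lambda_{2}$ is precisely realizability, $\sum\Lambda\ge0$, I can set each $\omega_{j}$ equal to its required bound and distribute the nonnegative surplus $\sum\Lambda$ arbitrarily. Then $P_{ij}\ge\omega_{j}\ge|Q_{ij}|$ for all $i,j$, giving $A=\tfrac12(P+Q)\ge0$ and $B=\tfrac12 J(P-Q)\ge0$, so the assembled $C$ is the desired centrosymmetric nonnegative matrix with spectrum $\Lambda$. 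I expect the two parity cases and the verification that the column maxima of $Q$ are the $|a_{j}|$ to be the only routine-but-careful bookkeeping.
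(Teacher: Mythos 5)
Your proposal is correct and follows essentially the same route as the paper: partition $\Lambda$ into two self-conjugate halves of size $m+1$ according to the parity of $m$, realize the non-Perron half by the block-diagonal real matrix $Q=A-JB$, use the Lemma \ref{SJC} (Brauer) construction to realize the other half by a nonnegative $P=A+JB$ whose column entries dominate the prescribed diagonal, and observe that $|a_j|\ge b_j$ forces $P\ge |Q|$ entrywise, hence $A,B\ge 0$. The only cosmetic difference is that the paper first normalizes to the trace-zero list $\Lambda'$ and restores the surplus $\sum_i\lambda_i$ afterwards via a rank-one Perron perturbation (Lemma \ref{cent}), whereas you absorb that surplus directly into the prescribed diagonal entries $\omega_j$; both are valid.
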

\begin{proof}
Since $\Lambda $ is realizable if only if $\sum_{i=1}^{n}\lambda _{i}\geq 0$, we take the list 
\begin{equation*}
\Lambda ^{\prime }=\{\lambda _{1}^{\prime },\lambda _{2},z_{1},\overline{z}%
_{1},\ldots ,z_{m},\overline{z}_{m}\},\ \text{with }\lambda _{1}^{\prime
}=-\lambda _{2}-2\sum_{j=1}^{m}a_{j},\ \text{and}\ a_{j}=Rez_{j}.
\end{equation*}%
For even $m$ we take the partition $\Lambda =\Lambda _{1}\cup \Lambda _{2}$
with 
\begin{eqnarray*}
\Lambda _{1} &=&\{\lambda _{11},z_{11},\overline{z}_{11},\ldots ,z_{1\frac{m%
}{2}},\overline{z}_{1\frac{m}{2}}\},\ \lambda _{11}=\lambda _{1}^{\prime },
\\
\Lambda _{2} &=&\{\lambda _{21},z_{21},\overline{z}_{21},\ldots ,z_{2\frac{m%
}{2}},\overline{z}_{2\frac{m}{2}}\},\ \lambda _{21}=\lambda _{2}.
\end{eqnarray*}%
Then,
\begin{equation*}
A-JB=\lambda _{2}\bigoplus_{j=1}^{\frac{m}{2}}%
\begin{bmatrix}
a_{2j} & -b_{2j} \\ 
b_{2j} & a_{2j}%
\end{bmatrix}%
\end{equation*}%
has spectrum $\Lambda _{2}$, and from Lemma \ref{SJC} we can always compute a
nonnegative matrix $A+JB$ with spectrum $\Lambda _{1}$ and diagonal entries 
\begin{equation*}
-\lambda _{2},-a_{21},-a_{21},\ldots ,-a_{2\frac{m}{2}},-a_{2\frac{m}{2}}.
\end{equation*} 
Since $\left\vert Rez _{j}\right\vert \geq \left\vert 
Imz _{j}\right\vert ,$ $j=1,\ldots ,m,$ both matrices,%
\begin{equation*}
A=\frac{1}{2}(A+JB+A-JB)\ \ \text{and}\ \ B=\frac{1}{2}J((A+JB)-(A-JB)).
\end{equation*}%
are nonnegative and 
\begin{equation*}
C^{\prime }=%
\begin{bmatrix}
A & JBJ \\ 
B & JAJ%
\end{bmatrix}%
\end{equation*}%
is centrosymmetric nonnegative with spectrum $\Lambda ^{\prime }$. Finally,
from Lemma \ref{cent}, $C=C^{\prime }+\frac{\sum_{i=1}^{n}\lambda _{i}}{%
\mathbf{v}^{T}\mathbf{v}}\mathbf{v}\mathbf{v}^{T}$, where $\mathbf{v}$ is
the Perron eigenvector of $C^{\prime }$, is centrosymmetric nonnegative with spectrum $\Lambda $. \\ \\
For odd $m$ we take the partition $\Lambda =\Lambda _{1}\cup \Lambda _{2}$
with 
\begin{eqnarray*}
\Lambda _{1} &=&\{\lambda _{11},\lambda _{12},z_{11},\overline{z}%
_{11},\ldots ,z_{1\lfloor \frac{m}{2}\rfloor },\overline{z}_{1\lfloor \frac{m%
}{2}\rfloor }\},\  \\
\Lambda _{2} &=&\{z_{21},\overline{z}_{21},\ldots ,z_{2\lceil \frac{m}{2}%
\rceil },\overline{z}_{2\lceil \frac{m}{2}\rceil }\},\text{ }\lambda
_{11}=\lambda _{1}^{\prime },\ \ \lambda _{12}=\lambda _{2},
\end{eqnarray*}%
and the proof follows as above.
\end{proof}\ \\
More generally, if $\Lambda $ has $r\geq 2$ real numbers, with even $r$, we
have the following result:

\begin{corollary}
\label{coro1} Let $\Lambda =\{\lambda _{1},\ldots,\lambda_{r},z_{1},\overline{z}_{1},\ldots ,z_{m},\overline{z}_{m}\}$, with $z_{j}, 
\overline{z}_{j}\in \mathcal{F}$, $z_{j}=a_{j}+ib_{j}$, $a_{j}\in \mathbb{R}$, $b_{j}>0$, $j=1,\ldots,m$; $\lambda_{j}\in \mathcal{F}$, $j=2,\ldots, r$, be a realizable list of complex numbers. Then $\Lambda$ is the spectrum of a centrosymmetric nonnegative matrix.
\end{corollary}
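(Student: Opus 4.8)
The plan is to follow the construction in the proof of Lemma \ref{2real}, viewing its two distinguished real numbers as the special case $r=2$ of the $r$ real numbers here, and to carry out the same sign analysis. Since $\Lambda$ is of Suleimanova type it is realizable exactly when $\sum_{i=1}^{n}\lambda_i\geq 0$, with $n=2m+r$. First I would replace $\Lambda$ by the trace--zero list
\[
\Lambda'=\{\lambda_1',\lambda_2,\ldots,\lambda_r,z_1,\overline{z}_1,\ldots,z_m,\overline{z}_m\},\qquad \lambda_1'=-\sum_{j=2}^{r}\lambda_j-2\sum_{j=1}^{m}a_j ,
\]
construct a centrosymmetric nonnegative realizer $C'$ of $\Lambda'$, and then recover $\Lambda$ by applying Lemma \ref{cent} to the Perron eigenvalue with $\epsilon=\sum_{i=1}^{n}\lambda_i\geq 0$.

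Because $r$ is even, $n$ is even, so I would use the even block form of Theorem \ref{Cen 2}$(i)$, in which $C'=\left[\begin{smallmatrix}A & JBJ\\ B & JAJ\end{smallmatrix}\right]$ is orthogonally similar to $(A+JB)\oplus(A-JB)$ with $A,B$ of order $m+\frac{r}{2}$. Accordingly I would split $\Lambda'=\Lambda_1\cup\Lambda_2$ into two self--conjugate sublists of size $m+\frac{r}{2}$, placing $\frac{r}{2}$ reals and $\frac{m}{2}$ pairs in each when $m$ is even, and $\frac{r}{2}+1$ reals with $\lfloor\frac{m}{2}\rfloor$ pairs in $\Lambda_1$ versus $\frac{r}{2}-1$ reals with $\lceil\frac{m}{2}\rceil$ pairs in $\Lambda_2$ when $m$ is odd; the hypothesis $r\geq 2$ is exactly what makes this distribution possible, and $r=2$ reproduces Lemma \ref{2real}. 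I take $\lambda_1'$ to be the Perron element of $\Lambda_1$, so that $\sum\Lambda_1=-\sum\Lambda_2\geq 0$ and $\Lambda_1$ is again a realizable Suleimanova list.

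Next I would set $A-JB$ equal to the block--diagonal matrix whose blocks are the scalars assigned to $\Lambda_2$ and the rotation blocks $\left[\begin{smallmatrix}a_j & -b_j\\ b_j & a_j\end{smallmatrix}\right]$ for the pairs of $\Lambda_2$; its spectrum is $\Lambda_2$ and its diagonal is made up of those scalars and of each $a_j$ twice. Then, using Lemma \ref{SJC}, I would build a nonnegative matrix $A+JB$ with spectrum $\Lambda_1$ whose prescribed diagonal is the entrywise negative of the diagonal of $A-JB$; the diagonal--sum hypothesis of Lemma \ref{SJC} is automatic, since it amounts to $\sum\Lambda_1=-tr(A-JB)=-\sum\Lambda_2$. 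Finally I would put $A=\frac12\big((A+JB)+(A-JB)\big)$ and $B=\frac12 J\big((A+JB)-(A-JB)\big)$.

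The crux of the argument is to verify that $A$ and $B$ are entrywise nonnegative. Writing $S_1=A+JB$ and $S_2=A-JB$, the diagonal of $S_1$ was chosen so that $A$ has zero diagonal while $S_1-S_2$ has diagonal entries $-2a_j\geq 0$ and $-2\lambda_j\geq 0$; off the diagonal the only negative entries of $S_2$ are the $\pm b_j$ inside the rotation blocks, so nonnegativity of both $A=\frac12(S_1+S_2)$ and $B=\frac12 J(S_1-S_2)$ reduces to showing that the two entries of $S_1$ lying over each rotation block are at least $b_j$. Here I would use the explicit Brauer form $S_1=B_0+\mathbf{e}\mathbf{q}^{T}$ from the proof of Lemma \ref{SJC}, with $B_0\geq 0$ and $\mathbf{q}$ the prescribed diagonal: each such entry of $S_1$ is at least the corresponding prescribed diagonal value $-a_j=|a_j|$, and the Suleimanova hypothesis $|Re z_j|=|a_j|\geq |b_j|=|Im z_j|$ gives $|a_j|\geq b_j$, which is exactly what is needed. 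This domination, together with $a_j\leq 0$ and $\lambda_j\leq 0$ for the real entries in $\mathcal{F}$, yields $A,B\geq 0$, so $C'$ is centrosymmetric nonnegative with spectrum $\Lambda'$ and Lemma \ref{cent} completes the proof. The only genuinely new ingredient beyond Lemma \ref{2real} is the combinatorial bookkeeping of splitting $r$ rather than two real numbers into equal halves; the sign analysis is identical.
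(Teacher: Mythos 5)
Your proposal is correct and follows essentially the same route as the paper: pass to the trace-zero list $\Lambda'$, split it into two self-conjugate halves with $\tfrac{r}{2}$ (resp.\ $\tfrac{r}{2}\pm 1$) reals each, realize $\Lambda_2$ by the block-diagonal matrix $A-JB$, use Lemma \ref{SJC} to get $A+JB$ with the opposite diagonal, and finish with Lemma \ref{cent}. The only difference is that you spell out the nonnegativity of $A$ and $B$ via the Brauer form $S_1=B_0+\mathbf{e}\mathbf{q}^{T}$ and the bound $|a_j|\ge b_j$, a verification the paper leaves implicit by deferring to Lemma \ref{2real}.
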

\begin{proof}
We consider the list 
\begin{eqnarray*}
\Lambda ^{\prime } &=&\{\lambda _{1}^{\prime },\lambda _{2},\ldots ,\lambda
_{r},z_{1},\overline{z}_{1},\ldots ,z_{m},\overline{z}_{m}\},\text{with}\ \ 
\\
\lambda _{1}^{\prime } &=&-\sum_{i=2}^{r}\lambda _{i}-2\sum_{j=1}^{m}a_{j},%
\text{ where }a_{j}=Rez_{j},\text{ }j=1,\ldots ,m.
\end{eqnarray*}%
Then, for even $m$ we take the partition of $\Lambda ^{\prime }=\Lambda
_{1}\cup \Lambda _{2}$ with: 
\begin{eqnarray*}
\Lambda _{1} &=&\{\lambda _{11},\lambda _{12},\ldots ,\lambda _{1\frac{r}{2}%
},z_{11},\overline{z}_{11},\ldots ,z_{1\frac{m}{2}},\overline{z}_{1\frac{m}{2%
}}\},\ \lambda _{11}=\lambda _{1}^{\prime },\  \\
\Lambda _{2} &=&\{\lambda _{21},\lambda _{22},\ldots ,\lambda _{2\frac{r}{2}%
},z_{21},\overline{z}_{21},\ldots ,z_{2\frac{m}{2}},\overline{z}_{2\frac{m}{2%
}}\},\ 
\end{eqnarray*}%
while for odd $m$ we take 
\begin{eqnarray*}
\Lambda _{1} &=&\{\lambda _{11},\lambda _{12},\ldots ,\lambda _{1,\frac{r}{2}%
+1},z_{11},\overline{z}_{11},\ldots ,z_{1\lfloor \frac{m}{2}\rfloor },%
\overline{z}_{1\lfloor \frac{m}{2}\rfloor }\},\ \lambda _{11}=\lambda
_{1}^{\prime } \\
\Lambda _{2} &=&\{\lambda _{21},\lambda _{22},\ldots ,\lambda _{2,\frac{r}{2}%
-1},z_{21},\overline{z}_{21},\ldots ,z_{2\lceil \frac{m}{2}\rceil },%
\overline{z}_{2\lceil \frac{m}{2}\rceil }\},
\end{eqnarray*}%
and the proof follows as the proof of Lemma \ref{2real}.
\end{proof}\ \\ \ \\
Now, we consider lists of complex numbers of Suleimanova type with an odd
number of elements. We start with the following result:
\begin{lemma}
\label{Lema} Let $\Lambda =\{\lambda _{1},z_{1},\overline{z}_{1},\ldots
,z_{m},\overline{z}_{m}\}$ be a list of complex numbers with $z_{j},%
\overline{z}_{j}\in \mathcal{F}$, $z_{j}=a_{j}+ib_{j}$, $a_{j}\in\mathbb{R}$, $b_{j}>0$, $j=1,\ldots ,m,$ $m$ even, which is
realizable. Then $\Lambda $ is the spectrum of a centrosymmetric nonnegative matrix.
\end{lemma}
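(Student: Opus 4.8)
The plan is to use the odd-order decomposition of Theorem \ref{Cen 2}, part $(ii)$: a nonnegative centrosymmetric matrix of order $n=2m+1$ is orthogonally similar to $\begin{bmatrix} c & \sqrt{2}\mathbf{y}^T \\ \sqrt{2}\mathbf{x} & A+JB\end{bmatrix}\oplus(A-JB)$, so it suffices to realize $\Lambda$ as the union of the spectrum of an $(m+1)\times(m+1)$ nonnegative matrix of that first type (carrying the Perron eigenvalue) and the spectrum of the $m\times m$ matrix $A-JB$. Since $m$ is even, I split the $m$ conjugate pairs into two groups of $\frac{m}{2}$ pairs: one group joins the Perron eigenvalue in the $(m+1)$-block, the other group (of even cardinality $m$) fills the $m$-block. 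This parity matching is exactly what breaks down when $m$ is odd, cf. Theorem \ref{NO realizable}. As in Lemma \ref{2real}, I first treat the boundary list $\Lambda'$ obtained by replacing $\lambda_1$ with $\lambda_1'=-2\sum_{j=1}^m a_j$ (so the trace is $0$), and recover the original list at the end through Lemma \ref{cent}, adding $\epsilon=\lambda_1-\lambda_1'=\sum_i\lambda_i\ge 0$ along the symmetric Perron eigenvector (the case $\epsilon=0$ being immediate).

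For the construction I set
\[
A-JB=\bigoplus_{k=1}^{\frac{m}{2}}\begin{bmatrix} a_{2k} & -b_{2k}\\ b_{2k} & a_{2k}\end{bmatrix},
\]
which has spectrum $\Lambda_2=\{z_{21},\overline{z}_{21},\ldots\}$, and I build the $(m+1)$-block via Lemma \ref{SJC} as a nonnegative matrix with spectrum $\Lambda_1=\{\lambda_1',z_{11},\overline{z}_{11},\ldots\}$ and prescribed diagonal $0,-a_{21},-a_{21},\ldots,-a_{2,\frac{m}{2}},-a_{2,\frac{m}{2}}$; in particular the corner entry is $c=0$. The hypotheses of Lemma \ref{SJC} are met: the prescribed diagonal sums to $-2\sum_k a_{2k}=\sum\Lambda_1$, and $\lambda_1'\ge -2\sum_k a_{1k}$, the boundary value of $\Lambda_1$, since $-2\sum_k a_{2k}\ge 0$. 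Setting $A=\tfrac12((A+JB)+(A-JB))$ and $B=\tfrac12 J((A+JB)-(A-JB))$ and assembling $C'=\begin{bmatrix}A&\mathbf{x}&JBJ\\ \mathbf{y}^T&c&\mathbf{y}^TJ\\ B&J\mathbf{x}&JAJ\end{bmatrix}$ yields a centrosymmetric matrix with spectrum $\Lambda'$, where $\mathbf{x},\mathbf{y}\ge 0$ are read off from the nonnegative first column and row of the $(m+1)$-block.

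The \emph{main step} is the nonnegativity of $A$ and $B$, and I would extract it from the explicit form of the Lemma \ref{SJC} realizer, $A+JB=B_0+\mathbf{e}\mathbf{q}^T$ with $B_0\ge 0$ in $\mathcal{CS}$ and $\mathbf{q}^T$ the prescribed diagonal. Because $B_0\ge 0$ and $\mathrm{tr}(B_0)=0$ force $B_0$ to have zero diagonal, every entry of $A+JB$ in column $l$ is at least its diagonal entry $\omega_l\ge 0$, with equality on the diagonal. Hence in the two columns meeting the $k$-th diagonal block the off-diagonal entries of $A+JB$ are at least $-a_{2k}=|a_{2k}|\ge b_{2k}$, and this is exactly where the Suleimanova inequality $|Re\,z_{2k}|\ge|Im\,z_{2k}|$ enters. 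Comparing with the entries $\pm b_{2k}$ contributed by $A-JB$ then makes both $(A+JB)+(A-JB)$ and $(A+JB)-(A-JB)$ entrywise nonnegative (their diagonals being $0$ and $-2a_{2k}\ge 0$, and their off-block entries being inherited from $A+JB\ge 0$), so $A\ge 0$ and $B=\tfrac12 JM\ge 0$. The argument would collapse if the realizer were not of this column-dominated form, so nailing down that structural property is the crux; everything else is the bookkeeping of Lemma \ref{2real} transported to odd order.

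Finally, applying Lemma \ref{cent} to $C'$ with $\epsilon=\lambda_1-\lambda_1'$ produces a centrosymmetric nonnegative matrix with spectrum $\Lambda$, which completes the proof.
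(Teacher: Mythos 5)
Your proposal is correct and follows essentially the same route as the paper: the same shifted list $\Lambda'$ with $\lambda_1'=-2\sum_j a_j$, the same even split of the $m$ conjugate pairs between the $(m+1)$-block and $A-JB$, the same use of Lemma \ref{SJC} to prescribe the diagonal $0,-a_{21},-a_{21},\ldots$, and the same final lift via Lemma \ref{cent}. The only difference is that you spell out the nonnegativity of $A$ and $B$ (via the column-dominance of the Brauer realizer $B_0+\mathbf{e}\mathbf{q}^T$ and the inequality $-a_{2k}\ge b_{2k}$), a step the paper merely asserts.
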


\begin{proof}
We consider the list 
\begin{equation*}
\Lambda ^{\prime }=\{-2\sum_{j=1}^{m}a_{j},z_{1},\overline{z}_{1},\ldots
,z_{m},\overline{z}_{m}\},
\end{equation*}%
with the partition $\Lambda ^{\prime }=\Lambda _{1}\cup \Lambda _{2}$, where 
\begin{eqnarray*}
\Lambda _{1} &=&\{-2\sum_{j=1}^{m}a_{j},z_{11},\overline{z}_{11},\ldots ,z_{1
\frac{m}{2}},\overline{z}_{1\frac{m}{2}}\} \\
\Lambda _{2} &=&\{z_{21},\overline{z}_{21},\ldots
,z_{2\frac{m}{2}},\overline{z}_{2\frac{m}{2}}\}.
\end{eqnarray*}%
Then 
\begin{equation*}
A-JB=\bigoplus _{j=1}^{\frac{m}{2}}
\begin{bmatrix}
a_{2j} & -b_{2j} \\ 
b_{2j} & a_{2j}
\end{bmatrix}
\end{equation*}%
is an $m\times m$ real matrix with spectrum $\Lambda _{2},$ and 
\begin{equation*}
\begin{bmatrix}
c & \sqrt{2}\mathbf{y}^{T} \\ 
\sqrt{2}\mathbf{x} & A+JB%
\end{bmatrix}%
=%
\begin{bmatrix}
0 & \mathbf{a}^{T} \\ 
\mathbf{b} & A_{22}%
\end{bmatrix}%
\geq 0,
\end{equation*}%
of order $m+1,$ has spectrum $\Lambda _{1}$ and diagonal entries 
\begin{equation*}
0,-a_{21},-a_{21},\ldots ,-a_{2\frac{m}{2}},-a_{2\frac{m}{2}},
\end{equation*}%
which there exists from Lemma \ref{SJC}. Then $A=\frac{1}{2}(A_{22}+A-JB)$
and $B=\frac{1}{2}J(A_{22}-(A-JB))$, are both nonnegative and the
matrix 
\begin{equation*}
C^{\prime }=%
\begin{bmatrix}
A & \frac{\mathbf{b}}{\sqrt{2}} & JBJ \\ 
\frac{\mathbf{a}^{T}}{\sqrt{2}} & 0 & \frac{\mathbf{a}^{T}}{\sqrt{2}}J \\ 
B & J\frac{\mathbf{b}}{\sqrt{2}} & JAJ%
\end{bmatrix}%
\end{equation*}%
is centrosymmetric nonnegative with spectrum $\Lambda ^{\prime }$. Thus, $%
C=C^{\prime }+\frac{\sum_{i=1}^{n}\lambda _{i}}{\mathbf{v}^{T}\mathbf{v}}%
\mathbf{v}\mathbf{v}^{T}$, where $\mathbf{v}$ is the Perron eigenvector of $%
C^{\prime }$, is centrosymmetric nonnegative with spectrum $\Lambda $.%
\end{proof}\ \\ \\
More generally, if $\Lambda $ has $p$ real numbers, with odd $p,$ we have
the following result:
\begin{corollary}
Let $\Lambda =\{\lambda _{1},\ldots ,\lambda _{p},z_{1},\overline{z}%
_{1},\ldots ,z_{m},\overline{z}_{m}\}$, with $z_{j},\overline{z}_{j}\in 
\mathcal{F}$, $z_{j}=a_{j}+ib_{j}$, $a_{j}\in\mathbb{R}$, $b_{j}>0$, $j=1,\ldots ,m$; $\lambda _{k}\in \mathcal{F},$ $k=2,\ldots
,p, $ be a realizable list of complex numbers. Then $\Lambda $ is the
spectrum of a centrosymmetric nonnegative matrix. 
\end{corollary}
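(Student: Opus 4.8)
The plan is to imitate the proof of Lemma~\ref{Lema}, now with the $p-1$ extra nonpositive reals $\lambda_2,\dots,\lambda_p$ playing the role that resolves the parity obstruction of Theorem~\ref{NO realizable}; this is the odd-$p$ analogue of Corollary~\ref{coro1}. Since $\Lambda$ is of Suleimanova type, it is realizable if and only if $\sum_{i=1}^{n}\lambda_i\ge 0$, where $n=p+2m$ is odd. First I would pass to the zero-trace list
\begin{equation*}
\Lambda^{\prime}=\{\lambda_1^{\prime},\lambda_2,\dots,\lambda_p,z_1,\overline{z}_1,\dots,z_m,\overline{z}_m\},\qquad \lambda_1^{\prime}=-\sum_{k=2}^{p}\lambda_k-2\sum_{j=1}^{m}a_j.
\end{equation*}
Because $\lambda_k\le 0$ and $a_j=\mathrm{Re}\,z_j\le 0$, the number $\lambda_1^{\prime}$ is nonnegative and is the Perron eigenvalue of $\Lambda^{\prime}$, while $\lambda_1-\lambda_1^{\prime}=\sum_{i=1}^{n}\lambda_i\ge 0$.

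Next I would split $\Lambda^{\prime}=\Lambda_1\cup\Lambda_2$ into two self-conjugate sublists, where $\Lambda_1$ hosts the Perron eigenvalue and has size $m+\tfrac{p+1}{2}$ (the order of the leading block in Theorem~\ref{Cen 2}$(ii)$) and $\Lambda_2$ has size $m+\tfrac{p-1}{2}$. For even $m$ I would place $\lambda_1^{\prime}$ together with $\tfrac{p-1}{2}$ of the remaining reals and $\tfrac{m}{2}$ conjugate pairs in $\Lambda_1$, leaving the other $\tfrac{p-1}{2}$ reals and $\tfrac{m}{2}$ pairs in $\Lambda_2$; for odd $m$ I would put $\lambda_1^{\prime}$, $\tfrac{p-3}{2}$ further reals and $\tfrac{m+1}{2}$ pairs in $\Lambda_1$, leaving $\tfrac{p+1}{2}$ reals and $\tfrac{m-1}{2}$ pairs in $\Lambda_2$. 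This is exactly where $p\ge 3$ is needed when $m$ is odd: with $p=1$ the leading block of size $m+1$ would have to carry the Perron together with $m$ (an odd number of) nonreal eigenvalues, which cannot form conjugate pairs---precisely the obstruction of Theorem~\ref{NO realizable}.

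With the partition fixed, I would realize $\Lambda_2$ by the real matrix of order $m+\tfrac{p-1}{2}$
\begin{equation*}
A-JB=\Bigl(\bigoplus_{j}\begin{bmatrix} a_{2j} & -b_{2j}\\ b_{2j} & a_{2j}\end{bmatrix}\Bigr)\oplus diag\{\ldots\},
\end{equation*}
a direct sum of $2\times 2$ rotation blocks for the pairs of $\Lambda_2$ and $1\times 1$ blocks carrying its nonpositive reals. By Lemma~\ref{SJC} (applicable because $\Lambda_1$ is of Suleimanova type) I would then produce the nonnegative leading block
\begin{equation*}
\begin{bmatrix} c & \sqrt{2}\,\mathbf{y}^{T}\\ \sqrt{2}\,\mathbf{x} & A+JB\end{bmatrix}
\end{equation*}
of Theorem~\ref{Cen 2}$(ii)$, with spectrum $\Lambda_1$ and with prescribed diagonal equal to the entrywise negative of the diagonal of $A-JB$ (that is, $-a_{2j}$ opposite each rotation block and $-\lambda$ opposite each real block) together with $c=0$ in the central slot. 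One checks that this diagonal sums to $\sum\Lambda_1$, so Lemma~\ref{SJC} does apply, and then $A=\tfrac12\bigl((A+JB)+(A-JB)\bigr)$ and $B=\tfrac12 J\bigl((A+JB)-(A-JB)\bigr)$ have vanishing diagonals.

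Finally, $A$ and $B$ are nonnegative: the $1\times 1$ real blocks of $A-JB$ only adjust diagonals, while the defining inequality $|a_j|\ge b_j$ of $\mathcal{F}$ keeps every off-diagonal entry nonnegative, exactly as in Lemma~\ref{Lema}. Hence the matrix $C^{\prime}$ in the form of Theorem~\ref{Cen 1}$(ii)$, with $\mathbf{x},\mathbf{y}$ read off from the border of the leading block, is centrosymmetric nonnegative with spectrum $\Lambda^{\prime}$; applying Lemma~\ref{cent} with $\epsilon=\lambda_1-\lambda_1^{\prime}\ge 0$ lifts the Perron eigenvalue from $\lambda_1^{\prime}$ to $\lambda_1$ and yields a centrosymmetric nonnegative matrix with spectrum $\Lambda$. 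I expect the main obstacle to be the two middle steps: arranging the self-conjugate partition so that the leading block carries an even number of nonreal eigenvalues (which forces $p\ge 3$ for odd $m$ and recovers Theorem~\ref{NO realizable}), and then verifying, as in Lemma~\ref{Lema}, that matching the diagonals of $A+JB$ and $A-JB$ together with the $\mathcal{F}$-condition makes both $A$ and $B$ nonnegative.
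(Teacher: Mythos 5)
Your proposal follows the paper's proof essentially verbatim: pass to the zero-trace list with Perron root $\lambda_1'=-\sum_{k=2}^{p}\lambda_k-2\sum_j a_j$, split into two self-conjugate sublists whose sizes differ by one (possible for odd $m$ only when $p\ge 3$, recovering the obstruction of Theorem \ref{NO realizable}), realize $\Lambda_2$ by rotation and scalar blocks as $A-JB$, invoke Lemma \ref{SJC} to build the bordered block with spectrum $\Lambda_1$ and diagonal $0,-\lambda,\ldots,-a_{2j},\ldots$, use the $\mathcal{F}$-condition for nonnegativity of $A$ and $B$, and finish with Lemma \ref{cent}. The only difference is cosmetic: you spell out an explicit partition in the odd-$m$ case where the paper merely asserts one exists for $p\ge 3$.
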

\begin{proof}
Case even $m$.
We consider the list 
\begin{equation*}
\Lambda ^{\prime }=\{-\sum_{k=2}^{p}\lambda
_{k}-2\sum_{j=1}^{m}a_{j},\lambda _{2},\ldots ,\lambda _{p},z_{1},\overline{z%
}_{1},\ldots ,z_{m},\overline{z}_{m}\},
\end{equation*}%
and take the partition $\Lambda ^{\prime }=\Lambda _{1}\cup \Lambda _{2}$
with, 
\begin{eqnarray*}
\Lambda _{1} &=&\{-\sum_{k=2}^{p}\lambda _{k}-2\sum_{j=1}^{n}a_{j},\lambda
_{2},\ldots ,\lambda _{\lceil \frac{p}{2}\rceil },z_{11},\overline{z}%
_{11},\ldots ,z_{1\frac{m}{2}},\overline{z}_{1\frac{m}{2}}\} \\
&& \\
\Lambda _{2} &=&\{\lambda _{\lceil \frac{p}{2}\rceil +1},\ldots ,\lambda
_{p},z_{21},\overline{z}_{21},\ldots ,z_{2\frac{m}{2}},\overline{z
}_{2\frac{m}{2}}\}.
\end{eqnarray*}
Then 
\begin{equation*}
A-JB=diag\{\lambda _{\lceil \frac{p}{2}\rceil +1},\ldots ,\lambda
_{p}\}\bigoplus _{j=1}^{\frac{m}{2}}%
\begin{bmatrix}
a_{2j} & -b_{2j} \\ 
b_{2j} & a_{2j}
\end{bmatrix}%
\end{equation*}%
is a real matrix with spectrum $\Lambda _{2}$, and there exists a
nonnegative matrix $A+JB$ with spectrum $\Lambda _{1}$ and diagonal entries 
\begin{equation*}
0,-\lambda _{\lceil \frac{p}{2}\rceil +1},\ldots ,-\lambda _{p},-a_{21},-a_{21},\ldots ,-a_{2\frac{m}{2}},-a_{2\frac{m}{2}}.
\end{equation*}%
Then the proof follows as before. \newline
\ \newline
Case odd $m$: We consider a partition $\Lambda =\Lambda _{1}\cup \Lambda _{2}$, in which
the list $\Lambda _{1}$ has the Perron eigenvalue and only one more element
than the list $\Lambda _{2}.$ Both lists, $\Lambda _{1}$ and $\Lambda _{2}$
must be self-conjugated. Partitions of this type always exist if $p\geq 3.$
The construction of a nonnegative centrosymmetric matrix follows as in the
proof of Lemma \ref{Lema}.
\end{proof}\ \\
\begin{remark}
\label{R2} According to the results in this section, it is clear that a
realizable list of real numbers of Suleimanova type is\ in particular
realizable by a centrosymmetric nonnegative  matrix.
\end{remark}\ \\
We conclude this section by presenting a sufficient condition for the
existence and construction of centrosymmetric nonnegative  matrices with
prescribed spectrum\textbf{.} \newline
Let $\Lambda =\{\lambda _{1},\lambda _{2},\ldots ,\lambda _{n}\}$ be a
realizable list of complex numbers. In order to construct a centrosymmetric nonnegative matrix $A$ with spectrum $\Lambda $ we consider the
following partition of $\Lambda :$%
\begin{equation}
\left. 
\begin{array}{c}
\Lambda =\Lambda _{0}\cup \Lambda _{1}\cup \cdots \cup \Lambda _{\frac{p_{0}%
}{2}}\cup \Lambda _{\frac{p_{0}}{2}}\cup \cdots \cup \Lambda _{1},\text{ for
even }p_{0}, \\ 
\\ 
\Lambda =\Lambda _{0}\cup \Lambda _{1}\cup \cdots \cup \Lambda _{\lfloor 
\frac{p_{0}}{2}\rfloor }\cup \Lambda _{\lceil \frac{p_{0}}{2}\rceil }\cup
\Lambda _{\lfloor \frac{p_{0}}{2}\rfloor }\cup \cdots \cup \Lambda _{1},%
\text{ for odd }p_{0}, \\ 
\\ 
\text{with \ \ \ \ \ \ \ }\Lambda _{0}=\{\lambda _{01},\lambda _{02},\ldots
,\lambda _{0p_{0}}\},\text{ \ }\lambda _{01}=\lambda _{1}, \\ 
\\ 
\Lambda _{k}=\{\lambda _{k1},\lambda _{k2},\ldots ,\lambda _{kp_{k}}\},\text{
\ }k=1,2,\ldots ,\frac{p_{0}}{2}\text{ }(\lceil \frac{p_{0}}{2}\rceil \text{
for odd }p_{0}),%
\end{array}%
\right\}  \label{parti}
\end{equation}%
where some of the lists $\Lambda _{k}$ can be empty. For each list $\Lambda
_{k},$ $k\neq \lceil \frac{p_{0}}{2}\rceil ,$ we associate the list%
\begin{equation}
\Gamma _{k}=\{\omega _{k},\lambda _{k1},\lambda _{k2},\ldots ,\lambda
_{kp_{k}}\},\text{ }0\leq \omega _{k}\leq \lambda _{1},\text{\ }k=1,2,\ldots
,\frac{p_{0}}{2}(\lfloor \frac{p_{0}}{2}\rfloor \text{ for odd }p_{0}),
\label{gama}
\end{equation}%
which is realizable by a $(p_{k}+1)\times (p_{k}+1)$ nonnegative matrix $%
A_{k}.$ In particular, $A_{k}$ can be chosen as $A_{k}\in \mathcal{CS}%
_{\omega _{k}}.$ To the sub-list $\Lambda _{\lceil \frac{p_{0}}{2}\rceil }$
we associate the list $\Gamma _{\lceil \frac{p_{0}}{2}\rceil },$ which is
the spectrum of a centrosymmetric nonnegative matrix $A_{\lceil \frac{p_{0}}{%
2}\rceil }$ of order $p_{\lceil \frac{p_{0}}{2}\rceil +1}.$ Then, the block
diagonal matrices%
\begin{equation}
\left. 
\begin{array}{c}
A=diag\{A_{1},A_{2},\ldots ,A_{\frac{p_{0}}{2}},JA_{\frac{p_{0}}{2}}J,\ldots
,JA_{2}J,JA_{1}J\},\text{\ for even }p_{0},\text{ and} \\ 
A=diag\{A_{1},A_{2},\ldots ,A_{\lfloor \frac{p_{0}}{2}\rfloor },A_{\lceil 
\frac{p_{0}}{2}\rceil },JA_{\lfloor \frac{p_{0}}{2}\rfloor }J,\ldots
,JA_{2}J,JA_{1}J\},\text{ for odd }p_{0},%
\end{array}%
\right\}  \label{Apis}
\end{equation}%
are centrosymmetric nonnegative with spectrum 
\begin{eqnarray*}
\Gamma _{1}\cup \Gamma _{2}\cup \cdots \cup \Gamma _{\frac{%
p_{0}}{2}}\cup \Gamma _{\frac{p_{0}}{2}}\cup \cdots \cup \Gamma _{2}\cup
\Gamma _{1},\text{ even }p_{0}\text{\ and} \\
\Gamma _{1}\cup \Gamma _{2}\cup \cdots \cup \Gamma _{\lfloor 
\frac{p_{0}}{2}\rfloor }\cup \Gamma _{\lceil \frac{p_{0}}{2}\rceil }\cup
\Gamma _{\lfloor \frac{p_{0}}{2}\rfloor }\cup \cdots \cup \Gamma _{2}\cup
\Gamma _{1},\text{ odd }p_{0},
\end{eqnarray*}%
respectively. Of course, the matrices $J$ in \eqref{Apis} have the same order $%
p_{k}+1$ of the corresponding matrices $A_{k}$, $k=1,2,,\ldots ,\lfloor 
\frac{p_{0}}{2}\rfloor $. \newline
Let $X$ be an $n\times p_{0}$ matrix whose columns are nonnegative
eigenvectors of the matrix $A$ in \eqref{Apis}, that is, 
\begin{equation}
X=\left[ 
\begin{array}{cccccc}
\mathbf{x}_{1} &  &  &  &  &  \\ 
& \ddots &  &  &  &  \\ 
&  & \mathbf{x}_{\frac{p_{0}}{2}} &  &  &  \\ 
&  &  & \mathbf{x}_{\frac{p_{0}}{2}} &  &  \\ 
&  &  &  & \ddots &  \\ 
&  &  &  &  & \mathbf{x}_{1}%
\end{array}%
\right] ,\ \text{for even }p_{0},  \label{X}
\end{equation}%
and 
\begin{equation}
X=\left[ 
\begin{array}{ccccccc}
\mathbf{x}_{1} &  &  &  &  &  &  \\ 
& \ddots &  &  &  &  &  \\ 
&  & \mathbf{x}_{\lfloor \frac{p_{0}}{2}\rfloor } &  &  &  &  \\ 
&  &  & \mathbf{y}_{\lceil \frac{p_{0}}{2}\rceil } &  &  &  \\ 
&  &  &  & \mathbf{x}_{\lfloor \frac{p_{0}}{2}\rfloor } &  &  \\ 
&  &  &  &  & \mathbf{\ddots } &  \\ 
&  &  &  &  &  & \mathbf{x}_{1}%
\end{array}%
\right] ,\text{ for odd }p_{0}.  \label{XO}
\end{equation}

\noindent where, $\mathbf{x}_{k}=\mathbf{e}^{T}=[1,1,\ldots ,1]$ is the
Perron eigenvector of $A_{k}$ in (\ref{Apis}), $k=1,2,\ldots ,\frac{p_{0}}{2}
$ $(\lfloor \frac{p_{0}}{2}\rfloor ),$ and $\mathbf{y}_{\lceil \frac{p_{0}}{2%
}\rceil }$ is the Perron eigenvector of $A_{\lceil \frac{p_{0}}{2}\rceil }$,
with $\mathbf{y}_{\lceil \frac{p_{0}}{2}\rceil }=J\mathbf{y}_{\lceil \frac{%
p_{0}}{2}\rceil }\geq 0$ (symmetric vector). Observe that 
\begin{equation*}
\text{if \ }A_{k}\mathbf{e}=\omega _{k}\mathbf{e}\text{ \ then }JA_{k}J%
\mathbf{e}=\omega _{k}\mathbf{e}.
\end{equation*}%
Now we state the main result of this section. First, we consider the even $%
p_{0}$ case:

\begin{theorem}
\label{Ana3}Let $\Lambda =\{\lambda _{1},\lambda _{2},\ldots ,\lambda _{n}\}$
be a list of complex numbers with $\Lambda =\overline{\Lambda },$ $\lambda
_{1}\geq \left\vert \lambda _{i}\right\vert ,$ $i=2,3,\ldots ,n,$ and $%
\sum\limits_{i=1}^{n}\lambda _{i}\geq 0.$ Suppose there exists a partition
of $\Lambda $ (as defined in (\ref{parti})),%
\begin{eqnarray*}
\Lambda &=&\Lambda _{0}\cup \Lambda _{1}\cup \cdots \cup \Lambda _{\frac{%
p_{0}}{2}}\cup \Lambda _{\frac{p_{0}}{2}}\cup \cdots \cup \Lambda _{1},\text{
\ with even }p_{0}, \\
\Lambda _{0} &=&\{\lambda _{01},\lambda _{02},\ldots ,\lambda _{0p_{0}}\},%
\text{ \ }\lambda _{01}=\lambda _{1} \\
\Lambda _{k} &=&\{\lambda _{k1},\lambda _{k2},\ldots ,\lambda _{kp_{k}}\},%
\text{ \ }k=1,2,\ldots ,\frac{p_{0}}{2},\text{ }
\end{eqnarray*}%
where some of the lists $\Lambda _{k}$ can be empty, such that the following
conditions are satisfied:\newline
$i)$ For each $k=1,2,\ldots ,\frac{p_{0}}{2},$ there exists a nonnegative
matrix with spectrum 
\begin{equation*}
\Gamma _{k}=\{\omega _{k},\lambda _{k1},\lambda _{k2},\ldots ,\lambda
_{kp_{k}}\},\text{ }0\leq \omega _{k}\leq \lambda _{1},
\end{equation*}%
$ii)$ There exists a centrosymmetric nonnegative matrix of order $p_{0},$
with spectrum $\Lambda _{0}$ and diagonal entries $\omega _{1},\omega
_{2},\ldots ,\omega _{\frac{p_{0}}{2}},\omega _{\frac{p_{0}}{2}},\ldots
,\omega _{2},\omega _{1}.$\newline
Then $\Lambda $ is realizable by an $n\times n$ centrosymmetric matrix.
\end{theorem}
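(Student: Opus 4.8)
The plan is to realize $\Lambda$ by a single application of Rado's theorem (Theorem \ref{Rado}) to the block-diagonal matrix $A$ of \eqref{Apis}, using the perturbation to delete the auxiliary Perron values $\omega_k$ and insert the spectrum $\Lambda_0$ in their place, all while preserving both centrosymmetry and nonnegativity.

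First I would fix, for each $k=1,\ldots,\tfrac{p_0}{2}$, a nonnegative matrix $A_k\in\mathcal{CS}_{\omega_k}$ with spectrum $\Gamma_k$ (available by condition $i)$ together with the $\mathcal{CS}_{\omega_k}$ equivalence recalled in the Introduction), so that $A_k\mathbf{e}=\omega_k\mathbf{e}$ and hence $JA_kJ\mathbf{e}=\omega_k\mathbf{e}$. Assembling $A$ as in \eqref{Apis} gives a centrosymmetric nonnegative matrix whose spectrum is $\Gamma_1\cup\cdots\cup\Gamma_{p_0/2}$ repeated symmetrically; these eigenvalues split into the $p_0$ values $\omega_1,\ldots,\omega_{p_0/2},\omega_{p_0/2},\ldots,\omega_1$ and the remaining ones, which are precisely the members of $\Lambda\setminus\Lambda_0$. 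With $X$ the $n\times p_0$ matrix of \eqref{X}, whose columns are the Perron vectors $\mathbf{e}$ of the successive blocks, one has $\mathrm{rank}(X)=p_0$ and $AX=X\Omega$ with $\Omega=\mathrm{diag}\{\omega_1,\ldots,\omega_{p_0/2},\omega_{p_0/2},\ldots,\omega_1\}$.

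The key structural fact I would establish is $J_nX=XJ_{p_0}$: because the block sizes in \eqref{Apis} are arranged palindromically and $J\mathbf{e}=\mathbf{e}$, reversing the rows of $X$ carries its $j$-th column to its $(p_0+1-j)$-th column. Consequently, for any $p_0\times n$ matrix $\mathcal{C}$ with $J_{p_0}\mathcal{C}J_n=\mathcal{C}$ (that is, a centrosymmetric $\mathcal{C}$) the product $X\mathcal{C}$ is centrosymmetric, and so $A+X\mathcal{C}$ is centrosymmetric. Using the centrosymmetric nonnegative matrix $B_0$ of order $p_0$ from condition $ii)$, with spectrum $\Lambda_0$ and diagonal $\omega_1,\ldots,\omega_{p_0/2},\omega_{p_0/2},\ldots,\omega_1$, I would choose $\mathcal{C}$ so that each row $i$ distributes the nonnegative off-diagonal entry $(B_0)_{ij}$ uniformly over the columns of the $j$-th block of $A$; this yields $\mathcal{C}X=B_0-\Omega$ and $\mathcal{C}\geq0$. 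Since $\Omega$ and $B_0$ are both centrosymmetric and the block sizes are palindromic, the resulting $\mathcal{C}$ is itself centrosymmetric.

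Finally I would invoke Theorem \ref{Rado}: $A+X\mathcal{C}$ has as eigenvalues the spectrum of $\Omega+\mathcal{C}X=B_0$, namely $\Lambda_0$, together with the untouched eigenvalues of $A$, namely $\Lambda\setminus\Lambda_0$; hence its spectrum is $\Lambda$. As $A\geq0$ and $X\mathcal{C}\geq0$, the matrix $A+X\mathcal{C}$ is a centrosymmetric nonnegative realization of $\Lambda$. The main obstacle is the simultaneous demand that $\mathcal{C}$ be nonnegative, centrosymmetric, and satisfy $\mathcal{C}X=B_0-\Omega$; this is exactly where the centrosymmetry of $B_0$ supplied by condition $ii)$, combined with the palindromic block arrangement, is indispensable, since it is what makes the reflected entry constraints mutually consistent with one entrywise nonnegative, centrosymmetric choice of $\mathcal{C}$.
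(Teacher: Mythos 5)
Your proposal is correct and follows essentially the same route as the paper: assemble the block-diagonal centrosymmetric matrix $A$ from the $A_k\in\mathcal{CS}_{\omega_k}$, take $X$ as in \eqref{X}, and apply Rado's theorem with a nonnegative centrosymmetric $\mathcal{C}$ satisfying $\mathcal{C}X=B_0-\Omega$ so that $A+X\mathcal{C}$ realizes $\Lambda$. The only (immaterial) difference is that you spread each entry of $B_0-\Omega$ uniformly over the columns of the corresponding block, whereas the paper concentrates each column $b_j$ in a single column of $\mathcal{C}$ and pads with zeros as in \eqref{cma}.
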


\begin{proof}
From $i)$ let $A_{k}$ be a $(p_{k}+1)\times (p_{k}+1)$ nonnegative matrix
with spectrum $\Gamma _{k},$ $k=1,2,\ldots ,\frac{p_{0}}{2}.$ We may assume
that $A_{k}\in \mathcal{CS}_{\omega _{k}}.$ Then $A_{k}\mathbf{e}=\omega _{k}%
\mathbf{e},$ The matrix%
\begin{equation*}
A=A_{1}\oplus A_{2}\oplus \cdots \oplus A_{\frac{p_{0}}{2}}\oplus JA_{\frac{%
p_{0}}{2}}J\oplus \cdots \oplus JA_{2}J\oplus JA_{1}J
\end{equation*}%
is centrosymmetric nonnegative with spectrum 
\begin{equation*}
\Gamma =\Gamma _{1}\cup \cdots \cup \Gamma _{\frac{p_{0}}{2}}\cup \Gamma _{%
\frac{p_{0}}{2}}\cup \cdots \cup \Gamma _{1}.
\end{equation*}%
Let $X$ be the matrix in (\ref{X}). Then $X$ is  centrosymmetric nonnegative
of zeros and ones.\newline
Now, from $ii)$ and Theorem \ref{Rado} \ let $B=\Omega +\mathcal{C}X$ centrosymmetric nonnegative matrix of
order $p_{0}$ with spectrum $\Lambda _{0}$ and diagonal entries 
\begin{equation*}
\omega _{1},\ldots ,\omega _{\frac{p_{0}}{2}},\omega _{\frac{p_{0}}{2}%
},\ldots ,\omega _{1}.
\end{equation*}%
Then $\mathcal{C}X=B-\Omega =%
\begin{bmatrix}
b_{1} & \cdots & b_{\frac{p_{0}}{2}} & J_{p_{0}}b_{\frac{p_{0}}{2}} & \cdots
& J_{p_{0}}b_{1}%
\end{bmatrix}%
$ is centrosymmetric nonnegative, where $b_{k}$ and $J_{p_{0}}b_{k}$, $%
k=1,\ldots ,\frac{p_{0}}{2}$, are the columns of $B-\Omega $. Therefore the $%
p_{0}\times n$ matrix $\mathcal{C}$ can be obtained as follows 
\begin{equation}
\mathcal{C}=[b_{1}\underbrace{0\ldots 0}_{p_{1}-times}\cdots b_{\frac{p_{0}}{2}}%
\underbrace{0\ldots 0}_{\frac{p_{0}}{2}-times}\underbrace{0\ldots 0}_{\frac{%
p_{0}}{2}-times}J_{p_{0}}b_{\frac{p_{0}}{2}}\cdots \underbrace{0\ldots 0}%
_{p_{1}-times}J_{p_{0}}b_{1}].  \label{cma}
\end{equation}%
It is easy to see that $\mathcal{C}$ is centrosymmetric nonnegative. Then the
perturbation $A+X\mathcal{C}$, with $A,X,\mathcal{C}$ as in \eqref{Apis},\eqref{X},\eqref{cma}
respectively, is nonnegative centrosymmetric and by Theorem \ref{Rado}, $A+X\mathcal{C}$
have the spectrum $\Lambda $.
\end{proof}\ \\ \\ 
Now we consider the odd $p_{0}$ case:
\begin{theorem}
\label{Ana4}Let $\Lambda =\{\lambda _{1},\lambda _{2},\ldots ,\lambda _{n}\}$
be a list of complex numbers with $\Lambda =\overline{\Lambda },$ $\lambda
_{1}\geq \left\vert \lambda _{i}\right\vert ,$ $i=2,3,\ldots ,n,$ and $%
\sum\limits_{i=1}^{n}\lambda _{i}\geq 0.$ Suppose there exists a partition
of $\Lambda $ (as defined in (\ref{parti})),%
\begin{eqnarray*}
\Lambda &=&\Lambda _{0}\cup \Lambda _{1}\cup \cdots \cup \Lambda _{\lfloor 
\frac{p_{0}}{2}\rfloor }\cup \Lambda _{\lceil \frac{p_{0}}{2}\rceil }\cup
\Lambda _{\lfloor \frac{p_{0}}{2}\rfloor }\cup \cdots \cup \Lambda _{1},%
\text{with odd}\ p_{0},  \\
\Lambda _{0} &=&\{\lambda _{01},\lambda _{02},\ldots ,\lambda _{0p_{0}}\},%
\text{ \ }\lambda _{01}=\lambda _{1} \\
\Lambda _{k} &=&\{\lambda _{k1},\lambda _{k2},\ldots ,\lambda _{kp_{k}}\},%
\text{ \ }k=1,2,\ldots ,\bigg\lceil\frac{p_{0}}{2}\bigg\rceil,\text{ }
\end{eqnarray*}%
where some of the lists $\Lambda _{k}$ can be empty, such that the following
conditions are satisfied:\newline
$i)$ For each $k=1,2,\ldots ,\lfloor \frac{p_{0}}{2}\rfloor ,$ there exists
a nonnegative matrix with spectrum 
\begin{equation*}
\Gamma _{k}=\{\omega _{k},\lambda _{k1},\lambda _{k2},\ldots ,\lambda
_{kp_{k}}\},\text{ }0\leq \omega _{k}\leq \lambda _{1},
\end{equation*}%
while for $k=\lceil \frac{p_{0}}{2}\rceil $ there exists a centrosymmetric nonnegative
matrix with spectrum $\Gamma _{\lceil \frac{p_{0}}{2}\rceil
} $. \newline
$ii)$ There exists a centrosymmetric nonnegative matrix of order $p_{0},$
with spectrum $\Lambda _{0}$ and diagonal entries $\omega _{1},\omega
_{2},\ldots ,\omega _{\lfloor \frac{p_{0}}{2}\rfloor },\omega _{\lceil \frac{%
p_{0}}{2}\rceil },\omega _{\lfloor \frac{p_{0}}{2}\rfloor },\ldots ,\omega
_{2},\omega _{1}.$ \newline
Then $\Lambda $ is realizable by an $n\times n$ centrosymmetric matrix.
\end{theorem}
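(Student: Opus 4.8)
The plan is to mirror the proof of Theorem \ref{Ana3}, replacing the even-$p_0$ block structures by their odd-$p_0$ counterparts \eqref{Apis} and \eqref{XO}. The only genuinely new ingredient is the self-centrosymmetric central block $A_{\lceil p_0/2\rceil}$ together with its symmetric Perron eigenvector; everything else is the same Rado-theorem bookkeeping inherited from the even case.

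First I would invoke hypothesis $i)$: for $k=1,\ldots,\lfloor p_0/2\rfloor$ choose a nonnegative $A_k\in\mathcal{CS}_{\omega_k}$ with spectrum $\Gamma_k$, so that $A_k\mathbf{e}=\omega_k\mathbf{e}$, and choose a centrosymmetric nonnegative $A_{\lceil p_0/2\rceil}$ with spectrum $\Gamma_{\lceil p_0/2\rceil}$; by Lemma \ref{eigen} its Perron eigenvector $\mathbf{y}_{\lceil p_0/2\rceil}$ may be taken nonnegative and symmetric, $J\mathbf{y}_{\lceil p_0/2\rceil}=\mathbf{y}_{\lceil p_0/2\rceil}$. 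Then the block-diagonal matrix $A$ of \eqref{Apis} is nonnegative with spectrum $\Gamma_1\cup\cdots\cup\Gamma_{\lfloor p_0/2\rfloor}\cup\Gamma_{\lceil p_0/2\rceil}\cup\Gamma_{\lfloor p_0/2\rfloor}\cup\cdots\cup\Gamma_1$, and it is centrosymmetric because its outer blocks are paired as $A_k$ and $JA_kJ$ while the central block satisfies $JA_{\lceil p_0/2\rceil}J=A_{\lceil p_0/2\rceil}$, whence $JAJ=A$. Taking $X$ as in \eqref{XO}, its $p_0$ columns have pairwise disjoint supports, so $\mathrm{rank}(X)=p_0$; each column is an eigenvector of $A$ (using $JA_kJ\mathbf{e}=\omega_k\mathbf{e}$ on the mirrored blocks and $A_{\lceil p_0/2\rceil}\mathbf{y}_{\lceil p_0/2\rceil}=\omega_{\lceil p_0/2\rceil}\mathbf{y}_{\lceil p_0/2\rceil}$ at the center), so $X$ is admissible for Theorem \ref{Rado}; and since $\mathbf{e}$ and $\mathbf{y}_{\lceil p_0/2\rceil}$ are symmetric and the blocks are placed palindromically, $X$ is nonnegative and rectangular-centrosymmetric, $J_nXJ_{p_0}=X$.

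Next I would use hypothesis $ii)$ with Rado's theorem. Put $\Omega=\mathrm{diag}\{\omega_1,\ldots,\omega_{\lfloor p_0/2\rfloor},\omega_{\lceil p_0/2\rceil},\omega_{\lfloor p_0/2\rfloor},\ldots,\omega_1\}$, which is centrosymmetric, and let $B$ be the centrosymmetric nonnegative matrix of order $p_0$ given by $ii)$, with spectrum $\Lambda_0$ and with exactly these diagonal entries. Then $B-\Omega=\mathcal{C}X$ is centrosymmetric by Lemma \ref{Le1}, has zero diagonal, and is therefore nonnegative. Writing its columns as $b_1,\ldots,b_{\lfloor p_0/2\rfloor},b_{\lceil p_0/2\rceil},J_{p_0}b_{\lfloor p_0/2\rfloor},\ldots,J_{p_0}b_1$—where centrosymmetry of odd order forces the central column to be symmetric, $J_{p_0}b_{\lceil p_0/2\rceil}=b_{\lceil p_0/2\rceil}$—I recover $\mathcal{C}$ exactly as in \eqref{cma}, placing the column $b_k$ in the position of $\mathcal{C}$ corresponding to the first coordinate of the $k$-th block of $X$ and zeros elsewhere. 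The palindromic placement of the zero blocks together with the symmetry of the central column yields $J_{p_0}\mathcal{C}J_n=\mathcal{C}$, so $\mathcal{C}$ is again nonnegative and rectangular-centrosymmetric.

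Finally, $A+X\mathcal{C}$ is nonnegative, and it is centrosymmetric because $JAJ=A$ and $J_n(X\mathcal{C})J_n=(J_nXJ_{p_0})(J_{p_0}\mathcal{C}J_n)=X\mathcal{C}$; by Theorem \ref{Rado} its spectrum is $\{\mu_1,\ldots,\mu_{p_0}\}$ together with the remaining eigenvalues of $A$, where the $\mu_i$ are the eigenvalues of $\Omega+\mathcal{C}X=B$, namely $\Lambda_0$, and the remaining eigenvalues are the $\Gamma_k$ with the $\omega_k$ deleted, i.e. $\Lambda\setminus\Lambda_0$. Hence $A+X\mathcal{C}$ realizes $\Lambda$. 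I expect the one delicate point—beyond routine bookkeeping copied from the even case—to be verifying that the central objects behave symmetrically: that $\mathbf{y}_{\lceil p_0/2\rceil}$ may be chosen symmetric (Lemma \ref{eigen}) and that the central column of $B-\Omega$ is symmetric, since these are precisely what keep $X$, $\mathcal{C}$, and ultimately $A+X\mathcal{C}$ centrosymmetric once the odd middle block is present.
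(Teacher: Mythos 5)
Your argument reproduces the paper's Case 1 but silently assumes that the central block $A_{\lceil p_0/2\rceil}$ has odd order, and it breaks down when that order is even --- a case the theorem permits (the order is $p_{\lceil p_0/2\rceil}+1$, which the partition does not constrain) and which the paper treats as a separate case with a different perturbation. The trouble is the central column of $\mathcal{C}$. For the outer blocks your recipe works because $\mathbf{x}_k=\mathbf{e}$, so putting $b_k$ in one column of the $k$-th block and zeros elsewhere gives $\mathcal{C}\mathbf{x}_k=b_k$, with $J_{p_0}b_k$ in the mirrored position of the mirrored block. For the central block, however, the relevant eigenvector is $\mathbf{y}_{\lceil p_0/2\rceil}$, not $\mathbf{e}$, so a single column $b_{\lceil p_0/2\rceil}$ placed at position $i_0$ of that block yields $\mathcal{C}\mathbf{y}_{\lceil p_0/2\rceil}=(\mathbf{y}_{\lceil p_0/2\rceil})_{i_0}\,b_{\lceil p_0/2\rceil}$: you must place it at the \emph{centre} of the central block (your ``first coordinate'' placement destroys the centrosymmetry of $\mathcal{C}$) and normalize $\mathbf{y}_{\lceil p_0/2\rceil}$ so that its central entry equals $1$. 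When the central block has even order there is no central position at all, and any single-position placement either violates $J_{p_0}\mathcal{C}J_n=\mathcal{C}$ or fails to return $b_{\lceil p_0/2\rceil}$ under $\mathcal{C}\mathbf{y}_{\lceil p_0/2\rceil}$.

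The paper closes this gap with a second construction: when $\Gamma_{\lceil p_0/2\rceil}$ is realized by a centrosymmetric matrix of even order, it normalizes the columns of $X$ (so $X^TX=I$), keeps $\mathcal{C}=B-\Omega$ as a $p_0\times p_0$ matrix, and uses the perturbation $A+X\mathcal{C}X^T$; Rado's theorem applied with the $p_0\times n$ matrix $\mathcal{C}X^T$ gives the eigenvalues of $\Omega+\mathcal{C}X^TX=B$, while $X\mathcal{C}X^T$ remains nonnegative and centrosymmetric by Lemma \ref{Le1}. In effect this distributes $b_{\lceil p_0/2\rceil}$ over the columns of the central block in proportion to the entries of $\mathbf{y}_{\lceil p_0/2\rceil}$, which is exactly what your single-column $\mathcal{C}$ cannot do. As written, your proof establishes only the odd-order half of the theorem; you need this second perturbation (or an equivalent redistribution of the central column) to cover the rest.
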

\begin{proof}
Case 1. The list $\Gamma _{\lceil \frac{p_{0}}{2}\rceil }$ is realizable by
a centrosymmetric nonnegative matrix of odd order: In this case the proof
follows as the proof of Theorem \ref{Ana3}, with a perturbation of the form $A+X\mathcal{C}$, with $A,X$ as in \eqref{Apis}, \eqref{XO} respectively, and   
\begin{eqnarray*}
\mathcal{C} &=&[b_{1}\underbrace{0\cdots 0}_{p_{1}-times}\cdots b_{\lfloor \frac{p_{0}%
}{2}\rfloor }\underbrace{0\cdots 0}_{\lfloor \frac{p_{0}}{2}\rfloor -times}%
\underbrace{0\cdots 0}_{\frac{\lceil \frac{p_{0}}{2}\rceil }{2}%
-times}b_{\lceil \frac{p_{0}}{2}\rceil }\underbrace{0\cdots 0}_{\frac{\lceil 
\frac{p_{0}}{2}\rceil }{2}-times} \\
&&\underbrace{0\cdots 0}_{\lfloor \frac{p_{0}}{2}\rfloor
-times}J_{p_{0}}b_{\lfloor \frac{p_{0}}{2}\rfloor }\cdots \underbrace{%
0\cdots 0}_{p_{1}-times}J_{p_{0}}b_{1}].
\end{eqnarray*}%
Case 2. The list $\Gamma _{\lceil \frac{p_{0}}{2}\rceil }$ is realizable by
a centrosymmetric nonnegative matrix of order even: In this case we consider 
$A$ as in \eqref{Apis}, $X$ as in \eqref{XO} with all its columns
normalized, and in $ii)$ we take $B=\Omega +\mathcal{C}$. Then $\mathcal{C}=B-\Omega $, is
 centrosymmetric nonnegative, and by Lemma \ref{Le1} and Theorem \ref{Rado}, the perturbation $A+X\mathcal{C}X^{T}$ is centrosymmetric nonnegative with
spectrum $\Lambda $.
\end{proof}\ \\ \\
To apply Theorem \ref{Ana3} and Theorem \ref{Ana4}, we need to know
conditions under which there exists a $p_{0}\times p_{0}$ centrosymmetric
nonnegative matrix with spectrum $\Lambda _{0}$ and diagonal entries%
\begin{eqnarray*}
&&\omega _{1},\omega _{2},\ldots ,\omega _{\frac{p_{0}}{2}},\omega _{\frac{%
p_{0}}{2}},\ldots ,\omega _{2},\omega _{1},\text{ for even }p_{0},\text{ \ or%
} \\
&&\omega _{1},\omega _{2},\ldots ,\omega _{\lfloor \frac{p_{0}}{2}\rfloor
},\omega _{\lceil \frac{p_{0}}{2}\rceil },\omega _{\lfloor \frac{p_{0}}{2}%
\rfloor },\ldots ,\omega _{2},\omega _{1}\text{ \ for odd }p_{0}.
\end{eqnarray*}%
\ \newline
In \cite[Theorem $3.1$]{Somph} (see also \cite{Somphot}) it has been proved
that the spectrum of a $3\times 3$ nonnegative centrosymmetric matrix is
real. Then, since bisymmetric matrices are centrosymmetric, we may use the
necessary and sufficient conditions given in \cite{Julio}, for the existence
of a $3\times 3$ centrosymmetric nonnegative matrix with real eigenvalues $%
\lambda _{1},\lambda _{2},\lambda _{3}$ and diagonal entries $\omega
_{1},\omega _{2},\omega _{1}.$ For $n=4$ we have, in the Theorems \ref{Sule
Real} and \ref{Sule Com} below, sufficient conditions for the existence and construction of a centrosymmetric nonnegative matrix with prescribed eigenvalues and diagonal
entries.\\ \\
The following result show that for lists of four real numbers, both
problems, the nonnegative inverse eigenvalue problem and the centrosymmetric
nonnegative inverse eigenvalue problem, are equivalent.
\begin{theorem}
Let $\Lambda=\{\lambda_{1},\lambda_{2},\lambda_{3},\lambda_{4}\}$ be a list
realizable of real numbers. Then $\Lambda$ is in particular realizable by a
centrosymmetric nonnegative matrix.
\end{theorem}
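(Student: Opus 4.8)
The plan is to use the block form of an even-order centrosymmetric matrix together with the spectral splitting of Theorem~\ref{Cen 2}, and to reduce the whole problem to an entrywise domination condition between two $2\times 2$ matrices. Order the list as $\lambda_1 \ge \lambda_2 \ge \lambda_3 \ge \lambda_4$. By Theorem~\ref{Cen 1}, any $4\times 4$ centrosymmetric matrix has the form $C = \begin{bmatrix} A & JBJ \\ B & JAJ \end{bmatrix}$ with $A,B$ of order $2$ and $J$ the $2\times 2$ counteridentity, and by Theorem~\ref{Cen 2} its spectrum is $\sigma(A+JB)\cup\sigma(A-JB)$. Writing $M_1 = A+JB$ and $M_2 = A-JB$, we have $A = \tfrac12(M_1+M_2)$ and $B = \tfrac12 J(M_1-M_2)$, so that $C$ is nonnegative precisely when $M_1 \ge 0$ and $-M_1 \le M_2 \le M_1$ entrywise, i.e. $|M_2| \le M_1$. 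Thus it suffices to split $\Lambda$ into two pairs, assign them to $\sigma(M_1)$ and $\sigma(M_2)$, and realize the pairs by real $2\times 2$ matrices, one nonnegative and the other entrywise dominated by it in absolute value. The only facts I will use about realizability are the two standard necessary conditions: the Perron condition $\lambda_1 \ge |\lambda_i|$ (hence $\lambda_1 + \lambda_4 \ge 0$) and the trace condition $s_1 = \lambda_1+\lambda_2+\lambda_3+\lambda_4 \ge 0$.

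The construction then splits according to the sign of $\lambda_2 + \lambda_3$. In the case $\lambda_2+\lambda_3 \ge 0$ I would take the diagonal matrices $M_1 = \mathrm{diag}\{\lambda_1,\lambda_2\}$ and $M_2 = \mathrm{diag}\{\lambda_4,\lambda_3\}$. Here $\lambda_2 \ge 0$ (since $2\lambda_2 \ge \lambda_2+\lambda_3 \ge 0$), so $M_1 \ge 0$; the domination $|M_2| \le M_1$ reduces to $|\lambda_4| \le \lambda_1$ (Perron) and $|\lambda_3| \le \lambda_2$ (which is $\lambda_3 \le \lambda_2$ when $\lambda_3 \ge 0$, and $\lambda_2+\lambda_3 \ge 0$ otherwise). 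This yields the nonnegative blocks $A = \tfrac12\,\mathrm{diag}\{\lambda_1+\lambda_4,\lambda_2+\lambda_3\}$ and $B = \tfrac12\begin{bmatrix} 0 & \lambda_2-\lambda_3 \\ \lambda_1-\lambda_4 & 0\end{bmatrix}$, completing this case. This branch contains the all-nonnegative lists already treated in Theorem~\ref{th. real}.

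In the complementary case $\lambda_2+\lambda_3 < 0$ I would instead use the symmetric realizations $M_1 = \tfrac12\begin{bmatrix} \lambda_1+\lambda_2 & \lambda_1-\lambda_2 \\ \lambda_1-\lambda_2 & \lambda_1+\lambda_2\end{bmatrix}$ and $M_2 = \tfrac12\begin{bmatrix} \lambda_3+\lambda_4 & \lambda_3-\lambda_4 \\ \lambda_3-\lambda_4 & \lambda_3+\lambda_4\end{bmatrix}$, whose spectra are $\{\lambda_1,\lambda_2\}$ and $\{\lambda_3,\lambda_4\}$. Now $M_2$ has a negative diagonal, so the domination $|M_2| \le M_1$ must be read on absolute values: the diagonal inequality $-(\lambda_3+\lambda_4) \le \lambda_1+\lambda_2$ is exactly $s_1 \ge 0$, while the off-diagonal inequality $\lambda_3-\lambda_4 \le \lambda_1-\lambda_2$ is equivalent to $\lambda_2+\lambda_3 \le \lambda_1+\lambda_4$. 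I expect this last (spread) inequality to be the main obstacle in any naive single-partition attempt; here it is immediate, because $\lambda_2+\lambda_3 < 0 \le \lambda_1+\lambda_4$ by the Perron condition. One then checks directly that $A = \tfrac12(M_1+M_2)$ and $B = \tfrac12 J(M_1-M_2)$ are nonnegative (the diagonal of $A$ equals $\tfrac14 s_1 \ge 0$, and the relevant off-diagonal entry of $B$ equals $\tfrac14[(\lambda_1+\lambda_4)-(\lambda_2+\lambda_3)] \ge 0$), so $C$ is a centrosymmetric nonnegative realization of $\Lambda$.

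Finally I would note that the converse implication is trivial, since a centrosymmetric nonnegative matrix is in particular nonnegative, so the two inverse eigenvalue problems coincide for four real numbers. The key structural insight driving the argument is the reduction of centrosymmetric nonnegativity to the entrywise domination $|M_2| \le M_1$, and the realization that the correct dichotomy is the sign of $\lambda_2 + \lambda_3$: this is precisely what lets every branch be settled using only the Perron and trace necessary conditions, with the delicate spread inequality $\lambda_3-\lambda_4 \le \lambda_1-\lambda_2$ following for free from $\lambda_1+\lambda_4 \ge 0$ in the range where it is needed.
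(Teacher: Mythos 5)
Your proof is correct, and it rests on the same underlying mechanism as the paper's: write the candidate matrix in the block form of Theorem \ref{Cen 1}, realize $\{\lambda_1,\lambda_2\}$ and $\{\lambda_3,\lambda_4\}$ by $2\times 2$ matrices $M_1=A+JB$ and $M_2=A-JB$, and check that $A=\tfrac12(M_1+M_2)$ and $B=\tfrac12 J(M_1-M_2)$ are nonnegative. The differences are organizational but genuinely worthwhile. The paper runs through four cases (all nonnegative; Suleimanova, delegated to Remark 3.1; one negative eigenvalue; two negative eigenvalues, itself split on the sign of $\lambda_2+\lambda_3$), and in the hardest subcase realizes $\{\lambda_1,\lambda_2\}$ by a companion-type matrix whose nonnegativity requires a separate computation. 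You instead isolate the exact criterion $|M_2|\le M_1$ entrywise for nonnegativity of $C$, which collapses everything to two cases governed solely by the sign of $\lambda_2+\lambda_3$: diagonal $M_1,M_2$ when $\lambda_2+\lambda_3\ge 0$, and symmetric circulant-type $M_1,M_2$ when $\lambda_2+\lambda_3<0$, with every inequality traced back to the Perron and trace conditions. In particular your second case absorbs the real Suleimanova lists directly (I checked: $\lambda_2+\lambda_3<0$ forces $\lambda_3,\lambda_4<0$, so the diagonal bound is exactly $s_1\ge 0$ and the off-diagonal spread bound follows from $\lambda_1+\lambda_4\ge 0$), so you do not need the separate machinery of Lemma 1.3 that the paper invokes for that case. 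The trade-off is that the paper's Case 3 argument for a single negative eigenvalue is stated for general $n$, whereas your argument is tailored to $n=4$; but as a proof of the stated theorem yours is cleaner and self-contained.
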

\begin{proof}
Case 1: If $\lambda _{1}\geq \lambda _{2}\geq \lambda _{3}\geq \lambda
_{4}\geq 0,$ the result follows from Theorem \ref{th. real} when $n=4$. 
\newline
Case 2: If $\lambda _{1}>0\geq \lambda _{2}\geq \lambda _{3}\geq \lambda
_{4} $, that is, if the list is of real Suleimanova type, then the
result follows from Remark \ref{R2}. \newline
Case 3: If $\lambda _{1}\geq \lambda _{2}\geq \lambda _{3}\geq 0>\lambda
_{4} $, we have the following general result:\\
A list realizable of real numbers with only one negative eigenvalue is also
realizable by a centrosymmetric nonnegative  matrix. In fact, let $\Lambda
=\{\lambda _{1},\ldots ,\lambda _{n}\}$, with $\lambda _{1}\geq \lambda
_{2}\geq \cdots \geq \lambda _{n-1}\geq 0>\lambda _{n}$. \\
If $n=2m$ is even we take the partition $\Lambda =\Lambda _{1}\cup \Lambda
_{2}$, with 
\begin{equation*}
\Lambda _{1}=\{\lambda _{1},\ldots ,\lambda _{m}\},\ \ \Lambda
_{2}=\{\lambda _{m+1},\ldots ,\lambda _{n}\},
\end{equation*}%
and we set 
\begin{equation*}
A+JB=diag\{\lambda _{1},\lambda _{2},\ldots ,\lambda _{m}\},\
A-JB=diag\{\lambda _{n},\lambda _{m+1},\ldots ,\lambda _{n-1}\},
\end{equation*}%
and the proof follows as before. \newline
If $n=2m+1$ is odd we take the partition $\Lambda =\Lambda _{1}\cup \Lambda
_{2}$, with 
\begin{equation*}
\Lambda _{1}=\{\lambda _{1},\ldots ,\lambda _{\lceil \frac{n}{2}\rceil }\},\
\ \Lambda _{2}=\{\lambda _{\lceil \frac{n}{2}\rceil +1}\ldots ,\lambda
_{n}\}.
\end{equation*}%
and we set 
\begin{equation*}
\begin{bmatrix}
A+JB & \sqrt{2}\mathbf{x} \\ 
\sqrt{2}\mathbf{y}^{T} & c%
\end{bmatrix}%
=diag\{\lambda _{1},\lambda _{2},\ldots ,\lambda _{\lfloor \frac{n}{2}%
\rfloor },\lambda _{\lceil \frac{n}{2}\rceil }\},
\end{equation*}%
\begin{equation*}
A-JB=diag\{\lambda _{n},\lambda _{\lceil \frac{n}{2}\rceil +1},\ldots
,\lambda _{n-1}\}.
\end{equation*}%
Then, the proof follows as before. \newline
Case 4: If $\lambda _{1}\geq \lambda _{2}\geq 0>\lambda _{3}\geq \lambda
_{4} $, with $\lambda _{2}+\lambda _{3}>0$; \\\
we take the partition $\Lambda =\Lambda _{1}\cup \Lambda _{2}$, with $%
\Lambda _{1}=\{\lambda _{1},\lambda _{2}\}$, $\Lambda _{2}=\{\lambda
_{3},\lambda _{4}\},$ and we set 
\begin{equation*}
A+JB=diag\{\lambda _{1},\lambda _{2}\},\ \ A-JB=diag\{\lambda _{4},\lambda
_{3}\}.
\end{equation*}%
Then, the proof follows as before.\newline
If $\lambda _{1}\geq \lambda _{2}\geq 0>\lambda _{3}\geq \lambda _{4}$, with 
$\lambda _{2}+\lambda _{3}<0;$ \newline
we take the partition $\Lambda _{1}=\{\lambda _{1},\lambda _{2}\},\ \
\Lambda _{2}=\{\lambda _{3},\lambda _{4}\}$ and we set 
\begin{equation*}
A+JB=%
\begin{bmatrix}
-\lambda _{3} & 1 \\ 
(-\lambda _{3})(\lambda _{1}+\lambda _{2}+\lambda _{3})-\lambda _{1}\lambda
_{2} & \lambda _{1}+\lambda _{2}+\lambda _{3}%
\end{bmatrix}%
\end{equation*}%
\begin{equation*}
A-JB=diag\{\lambda _{3},\lambda _{4}\}.
\end{equation*}%
Then 
\begin{equation*}
A=\frac{1}{2}%
\begin{bmatrix}
0 & 1 \\ 
-(\lambda _{2}+\lambda _{3})(\lambda _{1}+\lambda _{3}) & \lambda
_{1}+\lambda _{2}+\lambda _{3}+\lambda _{4}%
\end{bmatrix}%
,
\end{equation*}
\begin{equation*}
B=\frac{1}{2}J%
\begin{bmatrix}
-2\lambda _{3} & 1 \\ 
-(\lambda _{2}+\lambda _{3})(\lambda _{1}+\lambda _{3}) & \lambda
_{1}+\lambda _{2}+\lambda _{3}-\lambda _{4}%
\end{bmatrix}%
\end{equation*}
are both nonnegative matrices, and 
\begin{equation*}
C=%
\begin{bmatrix}
A & JBJ \\ 
B & JAJ%
\end{bmatrix}%
,
\end{equation*}%
is a $4\times 4$ centrosymmetric nonnegative matrix.
\end{proof}\ \\ \\
Now we give a sufficient condition for the existence and construction of a $%
4\times 4$ centrosymmetric nonnegative matrix with prescribed eigenvalues
and diagonal entries. We start with lists of real numbers.
\begin{theorem}
\label{Sule Real} Let $\Lambda=\{\lambda_{1},\lambda_{2},\lambda_{3},%
\lambda_{4}\}$ be a list of real numbers with $\sum_{i=1}^{4}\lambda_{i}\geq
0$, $\lambda_{1}\geq \vert\lambda_{j}\vert$, $j=2,3,4$. Let $%
\{\omega_{1},\omega_{2},\omega_{2},\omega_{1}\}$ be a list of nonnegative
numbers such that the following conditions are satisfied: \\
$i)$ $0\leq\omega_{k}\leq \lambda_{1}$\ $k=1,2$,\\
$ii)$ $\omega_{1}+\omega_{2}=\frac{1}{2}(\lambda_{1}+\lambda_{2}+%
\lambda_{3}+\lambda_{4})$, \newline
$iii)$ $\omega_{k}\geq\lambda_{k+2}$ \ $k=1,2$,\\
$iv)$ $(2\omega_{1}-\lambda_{3})(2\omega_{2}-\lambda_{4})\geq\lambda_{1}%
\lambda_{2}$. \\
Then, there exists a centrosymmetric nonnegative  matrix with espectrum $%
\Lambda$ and diagonal entries $\omega_{1},\omega_{2},\omega_{2},\omega_{1}$.
\end{theorem}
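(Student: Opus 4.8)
The plan is to reduce everything to the $n=4$ structure theorems and solve a small system by hand. By Theorem \ref{Cen 1}, a $4\times 4$ centrosymmetric matrix has the form $C=\begin{bmatrix}A & JBJ\\ B & JAJ\end{bmatrix}$ with $A,B,J$ of order $2$; by Theorem \ref{Cen 2} its spectrum is the union of the spectra of $A+JB$ and $A-JB$. Writing $A=(a_{ij})$, the diagonal of $C$ is $a_{11},a_{22},a_{22},a_{11}$, so prescribing the diagonal $\omega_{1},\omega_{2},\omega_{2},\omega_{1}$ is exactly the requirement $a_{11}=\omega_{1}$, $a_{22}=\omega_{2}$. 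First I would therefore search for a matrix $A+JB$ with spectrum $\{\lambda_{1},\lambda_{2}\}$ and a matrix $A-JB$ with spectrum $\{\lambda_{3},\lambda_{4}\}$, recover $A=\frac{1}{2}\big((A+JB)+(A-JB)\big)$ and $B=\frac{1}{2}J\big((A+JB)-(A-JB)\big)$, and then verify $A,B\geq 0$.

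The decisive simplification is to take $A-JB=diag\{\lambda_{3},\lambda_{4}\}$. The diagonal constraint then forces $A+JB=\begin{bmatrix}2\omega_{1}-\lambda_{3} & q\\ r & 2\omega_{2}-\lambda_{4}\end{bmatrix}$ for off-diagonal entries $q,r$ still to be chosen. Its trace equals $\lambda_{1}+\lambda_{2}$ automatically by condition $ii)$, and demanding that its determinant equal $\lambda_{1}\lambda_{2}$ yields $qr=(2\omega_{1}-\lambda_{3})(2\omega_{2}-\lambda_{4})-\lambda_{1}\lambda_{2}$, which is nonnegative precisely by condition $iv)$. Hence one can select $q,r\geq 0$ with this product (for instance $q=1$), so that $A+JB$ has the desired spectrum $\{\lambda_{1},\lambda_{2}\}$.

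It then remains to check nonnegativity. A direct computation gives $A=\begin{bmatrix}\omega_{1} & q/2\\ r/2 & \omega_{2}\end{bmatrix}$, which is nonnegative since $\omega_{1},\omega_{2}\geq 0$ (condition $i)$) and $q,r\geq 0$, and which carries the required diagonal $\omega_{1},\omega_{2}$. Likewise $B=J\begin{bmatrix}\omega_{1}-\lambda_{3} & q/2\\ r/2 & \omega_{2}-\lambda_{4}\end{bmatrix}$, and its entries $\omega_{1}-\lambda_{3}$ and $\omega_{2}-\lambda_{4}$ are nonnegative exactly by condition $iii)$, while $q/2,r/2\geq 0$; thus $B\geq 0$. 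Consequently $C=\begin{bmatrix}A & JBJ\\ B & JAJ\end{bmatrix}$ is centrosymmetric nonnegative, has spectrum $\{\lambda_{1},\lambda_{2}\}\cup\{\lambda_{3},\lambda_{4}\}=\Lambda$ by Theorem \ref{Cen 2}, and has diagonal entries $\omega_{1},\omega_{2},\omega_{2},\omega_{1}$, as required.

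The arithmetic is routine; the only genuine design choice — and the step I expect to be the crux — is the decision to make $A-JB$ diagonal and to assign $\{\lambda_{1},\lambda_{2}\}$ to $A+JB$. This is what decouples the constraints, so that condition $iv)$ becomes precisely the solvability of the determinant equation with $qr\geq 0$ and condition $iii)$ becomes precisely the nonnegativity of the ``$J$-reflected'' entries of $B$; any other split of the spectrum or nondiagonal choice of $A-JB$ would entangle these requirements. I would also note that condition $i)$ enters only through $\omega_{k}\geq 0$; the upper bound $\omega_{k}\leq\lambda_{1}$ is a necessity/consistency condition and is not actively used in this construction.
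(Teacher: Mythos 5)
Your proposal is correct and follows essentially the same route as the paper: it takes $A-JB=\mathrm{diag}\{\lambda_{3},\lambda_{4}\}$ and $A+JB$ with diagonal $2\omega_{1}-\lambda_{3},\,2\omega_{2}-\lambda_{4}$ and off-diagonal product fixed by the determinant condition (the paper simply makes the specific choice $q=1$), then verifies $A,B\geq 0$ exactly as you do. Your parametrization by $q,r$ even fixes a small typographical slip in the paper's displayed $B$ (whose $(2,2)$ entry should be $1/2$, not $1$), and your closing remarks about which hypothesis does what match the paper's subsequent discussion.
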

\begin{proof}
Let $\Lambda $ be particioned as $\Lambda =\Lambda _{1}\cup \Lambda _{2}$,
with $\Lambda _{1}=\{\lambda _{1},\lambda _{2}\}$ and $\Lambda
_{2}=\{\lambda _{3},\lambda _{4}\}$.We set 
\begin{equation*}
A+JB=%
\begin{bmatrix}
2\omega _{1}-\lambda _{3} & 1 \\ 
(2\omega _{1}-\lambda _{3})(2\omega _{2}-\lambda _{4})-\lambda _{1}\lambda
_{2} & 2\omega _{2}-\lambda _{4}%
\end{bmatrix}%
,\ A-JB=diag\{\lambda _{3},\lambda _{4}\},
\end{equation*}%
with spectrum $\Lambda _{1}$ and $\Lambda _{2}$ respectively. Then 
\begin{equation*}
A=%
\begin{bmatrix}
\omega _{1} & \frac{1}{2} \\ 
\frac{1}{2}((2\omega _{1}-\lambda _{3})(2\omega _{2}-\lambda _{4})-\lambda
_{1}\lambda _{2}) & \omega _{2}%
\end{bmatrix}%
,
\end{equation*}%
\begin{equation*}
B=%
\begin{bmatrix}
\frac{1}{2}((2\omega _{1}-\lambda _{3})(2\omega _{2}-\lambda _{4})-\lambda
_{1}\lambda _{2}) & \omega _{2}-\lambda _{4} \\ 
\omega _{1}-\lambda _{3} & 1%
\end{bmatrix}%
\end{equation*}%
are both nonnegative matrices and 
\begin{equation*}
C=%
\begin{bmatrix}
A & JBJ \\ 
B & JAJ%
\end{bmatrix}%
\end{equation*}%
is centrosymmetric nonnegative matrix with spectrum $\Lambda $ and diagonal
entries $\omega _{1},\omega _{2},\omega _{2},\omega _{1}$.
\end{proof}

\ \newline
Note that in the above theorem, if $\Lambda $ is a list of real numbers of
Suleimanova type, then the conditions $iii)$ and $iv)$ are always satisfied.
Then $i)$ and $ii)$ become necessary and sufficient conditions.\newline
\ \newline
In \cite[Theorem 3.1]{Somph} the authors show that $\lambda _{1}+\lambda
_{2}-2|a|\geq 0$ and $\lambda _{1}-\lambda _{2}-2b\geq 0$ are necessary and
sufficient conditions for the list $\{\lambda _{1},\lambda _{2},a+ib,a-ib\}$,
with $\lambda _{1},\lambda _{2},a\in \mathbb{R},b>0$ to be the spectrum of a normal centrosymmetric nonnegative matrix. Now, we
give a sufficient condition for the existence and construction of a centrosymmetric nonnegative matrix with prescribed spectrum and diagonal
entries.

\begin{theorem}
\label{Sule Com} Let $\Lambda =\{\lambda _{1},\lambda _{2},a+ib,a-ib\}$ be a
list of complex numbers with $\lambda _{1},\lambda _{2},a\in \mathbb{R}$, $%
b>0$, $\lambda_{1}+\lambda_{2}-2\vert a\vert\geq 0$ and $\lambda_{1}-%
\lambda_{2}-2b\geq0$. Let $\{\omega_{1},\omega_{2},\omega_{2},\omega_{1}\}$
be a list of nonnegative numbers, such that the following conditions
are satisfied: \\
$i)$ $0\leq \omega_{k}\leq \lambda_{1}$, $k=1,2$, \\
$ii)$ $\frac{1}{2}(\lambda_{1}+\lambda_{2}+2a)=\omega_{1}+\omega_{2}$, 
\\
$iii)$ $(2\omega_{1}-a)(2\omega_{2}-a)\geq \lambda_{1}\lambda_{2}+b^{2}$, 
\\
$iv)$ $\omega_{k}\geq a$, $k=1,2$.\\
Then, there exists a centrosymmetric nonnegative matrix with spectrum $%
\Lambda$ and diagonal entries $\omega_{1},\omega_{2},\omega_{2},\omega_{1}$.
\end{theorem}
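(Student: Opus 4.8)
The plan is to imitate the construction of Theorem \ref{Sule Real}, now splitting $\Lambda$ into its real part $\Lambda_{1}=\{\lambda_{1},\lambda_{2}\}$ and its complex pair $\Lambda_{2}=\{a+ib,a-ib\}$. I would produce two $2\times 2$ real matrices $A+JB$ and $A-JB$ with spectra $\Lambda_{1}$ and $\Lambda_{2}$, set $A=\frac{1}{2}((A+JB)+(A-JB))$ and $B=\frac{1}{2}J((A+JB)-(A-JB))$, and then invoke Theorem \ref{Cen 2} to conclude that $C=\begin{bmatrix} A & JBJ \\ B & JAJ \end{bmatrix}$ is centrosymmetric with spectrum $\Lambda$. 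To control the diagonal of $C$, I would fix the diagonal of $A+JB$ as $(2\omega_{1}-a,\,2\omega_{2}-a)$ and that of $A-JB$ as $(a,a)$, so that $A$ has diagonal $(\omega_{1},\omega_{2})$ and $C$ acquires the prescribed diagonal $\omega_{1},\omega_{2},\omega_{2},\omega_{1}$. Condition $ii)$ is exactly what forces the trace of $A+JB$ to equal $\lambda_{1}+\lambda_{2}$, so that its spectrum can indeed be $\Lambda_{1}$.

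For $A-JB$ I would take the rotation-type block $\begin{bmatrix} a & -b \\ b & a \end{bmatrix}$, whose trace $2a$ and determinant $a^{2}+b^{2}$ give the eigenvalues $a\pm ib$. For $A+JB$ I would take a matrix with diagonal as above and off-diagonal entries $p,q$; the determinant condition then forces $pq=D$, where $D:=(2\omega_{1}-a)(2\omega_{2}-a)-\lambda_{1}\lambda_{2}$. Writing out $A$ and $B$ explicitly, their diagonal entries are $\omega_{1},\omega_{2}$, nonnegative by condition $i)$, and the entries $\omega_{1}-a,\omega_{2}-a$ appearing in $B$ are nonnegative by condition $iv)$. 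What remains are the off-diagonal entries of $A$ and $B$, which take the forms $\frac{1}{2}(p\pm b)$ and $\frac{1}{2}(q\pm b)$; nonnegativity of $C$ thus reduces to $p\ge b$ and $q\ge b$, i.e.\ to making each off-diagonal of $A+JB$ dominate $b$.

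The main obstacle is precisely this off-diagonal balancing. The companion form used in the real case, with off-diagonals $1$ and $D$, fails here, since it would require $b\le 1$ and $D\ge b$. Instead I would use the freedom in factoring $pq=D$ and choose the symmetric off-diagonals $p=q=\sqrt{D}$; a symmetric real matrix with the correct trace and determinant still has spectrum $\{\lambda_{1},\lambda_{2}\}$, so this preserves $\Lambda_{1}$, and it makes the off-diagonal entries of $A$ and $B$ equal to $\frac{1}{2}(\sqrt{D}\pm b)$. Condition $iii)$, namely $(2\omega_{1}-a)(2\omega_{2}-a)\ge\lambda_{1}\lambda_{2}+b^{2}$, says exactly $D\ge b^{2}$, hence $\sqrt{D}\ge b$, so all these entries are nonnegative and $A,B\ge 0$; Theorem \ref{Cen 2} then finishes the proof. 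It is worth noting where the standing hypothesis $\lambda_{1}-\lambda_{2}-2b\ge 0$ enters: it is exactly what makes conditions $ii)$ and $iii)$ jointly solvable. Indeed, with $u_{k}=2\omega_{k}-a$ one has $u_{1}+u_{2}=\lambda_{1}+\lambda_{2}$, and the product $u_{1}u_{2}$ is maximized at $u_{1}=u_{2}$, where it equals $\frac{(\lambda_{1}+\lambda_{2})^{2}}{4}$; this is $\ge\lambda_{1}\lambda_{2}+b^{2}$ precisely when $\lambda_{1}-\lambda_{2}\ge 2b$, and similarly $\lambda_{1}+\lambda_{2}-2|a|\ge 0$ keeps conditions $i)$ and $ii)$ compatible.
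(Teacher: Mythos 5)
Your proof is correct, and it follows the same overall strategy as the paper: partition $\Lambda=\Lambda_{1}\cup\Lambda_{2}$ with $\Lambda_{1}=\{\lambda_{1},\lambda_{2}\}$, $\Lambda_{2}=\{a+ib,a-ib\}$, realize each by a $2\times 2$ block $A+JB$, $A-JB$ with the diagonals $(2\omega_{1}-a,2\omega_{2}-a)$ and $(a,a)$, recover $A$ and $B$, and check nonnegativity from conditions $i)$--$iv)$ before invoking Theorem \ref{Cen 2}. The only genuine difference is the choice of off-diagonal entries. You take the rotation block $\left[\begin{smallmatrix} a & -b\\ b & a\end{smallmatrix}\right]$ for $A-JB$ and are then forced to symmetrize $A+JB$ with off-diagonals $\sqrt{D}$, $D=(2\omega_{1}-a)(2\omega_{2}-a)-\lambda_{1}\lambda_{2}$, so that condition $iii)$ ($D\ge b^{2}$) gives $\tfrac12(\sqrt{D}\pm b)\ge 0$. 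The paper instead keeps the companion form for \emph{both} blocks, taking $A-JB=\left[\begin{smallmatrix} a & 1\\ -b^{2} & a\end{smallmatrix}\right]$; the $(1,2)$ entries then cancel in $B$ and average to $1$ in $A$, while the $(2,1)$ entries combine to $\tfrac12(D-b^{2})$ and $\tfrac12(D+b^{2})$, again nonnegative exactly by $iii)$. So your remark that ``the companion form fails here'' is only true relative to your choice of the rotation block for $A-JB$; matching the companion structure on both blocks, as the paper does, avoids the square root and keeps all entries polynomial in the data. Both constructions use the hypotheses in the same way and both are valid; your closing observation about when $ii)$ and $iii)$ are jointly solvable is a nice consistency check but is not needed for the proof.
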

\begin{proof}
Let $\Lambda $ be particioned as $\Lambda =\Lambda _{1}\cup \Lambda _{2}$
with $\Lambda _{1}=\{\lambda _{1},\lambda _{2}\}$, $\Lambda
_{2}=\{a+ib,a-ib\}$. We set 
\begin{equation*}
A+JB=%
\begin{bmatrix}
2\omega _{1}-a & 1 \\ 
(2\omega _{1}-a)(2\omega _{2}-a)-\lambda _{1}\lambda _{2} & 2\omega _{2}-a%
\end{bmatrix}%
,\ A-JB=%
\begin{bmatrix}
a & 1 \\ 
-b^{2} & a%
\end{bmatrix}%
,
\end{equation*}%
with spectrum $\Lambda _{1}$ and $\Lambda _{2}$, respectively. Then 
\begin{equation*}
A=%
\begin{bmatrix}
\omega _{1} & 1 \\ 
\frac{1}{2}((2\omega _{1}-a)(2\omega _{2}-a)-\lambda _{1}\lambda _{2}-b^{2})
& \omega _{2}%
\end{bmatrix}%
,
\end{equation*}%
\begin{equation*}
B=%
\begin{bmatrix}
\frac{1}{2}((2\omega _{1}-a)(2\omega _{2}-a)-\lambda _{1}\lambda _{2}+b^{2})
& \omega _{2}-a \\ 
\omega _{1}-a & 0%
\end{bmatrix}%
,
\end{equation*}%
are both nonnegative matrices, and 
\begin{equation*}
C=%
\begin{bmatrix}
A & JBJ \\ 
B & JAJ%
\end{bmatrix}%
,
\end{equation*}%
is nonnegative centrosymmetric matrix with spectrum $\Lambda $ and diagonal
entries $\omega _{1},\omega _{2},\omega _{2},\omega _{1}$.
\end{proof}\ \\ \\
Note that in the above theorem, if $\Lambda $ is a list of complex numbers
of Suleimanova type, then the conditions $\lambda _{1}+\lambda _{2}-2|a|\geq 0$ and 
$\lambda _{1}-\lambda _{2}-2b\geq 0$ are always satisfied. Hence, for $n=4,$
a list of complex numbers of Suleimanova type is always realizable by a
centrosymmetric matrix. Moreover, conditions $iii)$ and $iv)$ are always
satisfied for this kind of lists. Then, conditions $i)$ and $ii)$ become
necessary and sufficient conditions.
\section{Examples}
\begin{example}
Let $\Lambda =\{20,-1,-2,-3,-2+2i,-2-2i,-3+i,-3-i,-1+i,-1-i\}$. We shall use Corollary \ref{coro1} to
construct a centrosymmetric nonnegative matrix with spectrum $\Lambda $.
Then we consider the list%
\begin{equation*} 
\Lambda ^{\prime }=\{18,-1,-2,-3,-2+2i,-2-2i,-3+i,-3-i,-1+i,-1-i\},
\end{equation*}%
partitioned as $\Lambda ^{\prime }=\Lambda _{1}\cup \Lambda _{2}$
with $\Lambda _{1}=\{18,-1,-2,-2+2i,-2-2i\}$, $\Lambda
_{2}=\{-3,-3+i,-3-i,-1+i,-1-i\}.$ Then 
\begin{equation*}
A-JB=%
\begin{bmatrix}
-3 &  &  &  &  \\ 
& -3 & -1 &  &  \\ 
& 1 & -3 &  &  \\ 
&  &  & -1 & -1 \\ 
&  &  & 1 & -1%
\end{bmatrix}%
\end{equation*}%
has spectrum $\Lambda _{2}$. Now, from Lemma \ref{SJC}, we construct the
nonnegative matrix 
\begin{eqnarray*}
A+JB &=&
\begin{bmatrix}
0 &1  &2  &2  &2  \\ 
1 & 0 &2  &2  &2  \\ 
2 &1  &0 &2  &2  \\ 
4 &1  &2  &0 & 0 \\ 
0 &1  &2  &4 &0
\end{bmatrix}
+%
\begin{bmatrix}
1 \\ 
1 \\ 
1 \\ 
1 \\ 
1%
\end{bmatrix}%
\begin{bmatrix}
3 & 3 & 3 & 1 & 1
\end{bmatrix}
\\
&=&%
\begin{bmatrix}
3 & 4 & 5 & 3 & 3 \\ 
4 & 3 & 5 & 3 & 3 \\ 
5 & 4 & 3 & 3 & 3 \\ 
7 & 4 & 5 & 1 & 1 \\ 
3 & 4 & 5 & 5 & 1%
\end{bmatrix}
,
\end{eqnarray*}
with spectrum $\Lambda _{1}$ and diagonal entries $3,3,3,1,1.$ Then 
\begin{equation*}
A=\frac{1}{2}%
\begin{bmatrix}
0 & 4 & 5 & 3 & 3 \\ 
4 & 0 & 4 & 3 & 3 \\ 
5 & 5 & 0 & 3 & 3 \\ 
7 & 4 & 5 & 0 & 0 \\ 
3 & 4 & 5 & 6 & 0%
\end{bmatrix}%
,\text{ }B=\frac{1}{2}%
\begin{bmatrix}
3 & 4 & 5 & 4 & 2 \\ 
7 & 4 & 5 & 2 & 2 \\ 
5 & 3 & 6 & 3 & 3 \\ 
4 & 6 & 6 & 3 & 3 \\ 
6 & 4 & 5 & 3 & 3%
\end{bmatrix}%
,
\end{equation*}%
and%
\begin{equation*}
C^{\prime }=\frac{1}{2}%
\begin{bmatrix}
0 & 4 & 5 & 3 & 3 & 3 & 3 & 5 & 4 & 6 \\ 
4 & 0 & 4 & 3 & 3 & 3 & 3 & 6 & 6 & 4 \\ 
5 & 5 & 0 & 3 & 3 & 3 & 3 & 6 & 3 & 5 \\ 
7 & 4 & 5 & 0 & 0 & 2 & 2 & 5 & 4 & 7 \\ 
3 & 4 & 5 & 6 & 0 & 2 & 4 & 5 & 4 & 3 \\ 
3 & 4 & 5 & 4 & 2 & 0 & 6 & 5 & 4 & 3 \\ 
7 & 4 & 5 & 2 & 2 & 0 & 0 & 5 & 4 & 7 \\ 
5 & 3 & 6 & 3 & 3 & 3 & 3 & 0 & 5 & 5 \\ 
4 & 6 & 6 & 3 & 3 & 3 & 3 & 4 & 0 & 4 \\ 
6 & 4 & 5 & 3 & 3 & 3 & 3 & 5 & 4 & 0%
\end{bmatrix}%
\end{equation*}%
is  centrosymmetric nonnegative with spectrum $\Lambda ^{\prime }.$ Then 
\begin{equation*}
C=C^{\prime }+\frac{2}{\mathbf{e}^{T}\mathbf{e}}\mathbf{ee}^{T}=\frac{1}{10}%
\begin{bmatrix}
2 & 22 & 27 & 17 & 17 & 17 & 17 & 27 & 22 & 32 \\ 
22 & 2 & 22 & 17 & 17 & 17 & 17 & 32 & 32 & 22 \\ 
27 & 27 & 2 & 17 & 17 & 17 & 17 & 32 & 17 & 27 \\ 
37 & 22 & 27 & 2 & 2 & 12 & 12 & 27 & 22 & 37 \\ 
17 & 22 & 27 & 32 & 2 & 12 & 22 & 27 & 22 & 17 \\ 
17 & 22 & 27 & 22 & 12 & 2 & 32 & 27 & 22 & 17 \\ 
37 & 22 & 27 & 12 & 12 & 2 & 2 & 27 & 22 & 37 \\ 
27 & 17 & 32 & 17 & 17 & 17 & 17 & 2 & 27 & 27 \\ 
22 & 32 & 32 & 17 & 17 & 17 & 17 & 22 & 2 & 22 \\ 
32 & 22 & 27 & 17 & 17 & 17 & 17 & 27 & 22 & 2%
\end{bmatrix}%
\end{equation*}%
is centrosymmetric nonnegative with spectrum $\Lambda $.
\end{example}

\begin{example}
Let $\Lambda =\{10,3,1+i,1-i,-2+2i,-2-2i,-2+2i,-2-2i\}$. Now we use Theorem %
\ref{Ana3} and Theorem \ref{Sule Com} to construct a centrosymmetric nonnegative matrix with spectrum $%
\Lambda .$ We take the partition 
\begin{equation*}
\Lambda =\Lambda _{0}\cup \Lambda _{1}\cup \Lambda _{2}\cup \Lambda _{2}\cup
\Lambda _{1},\ \ \text{with}
\end{equation*}%
\begin{equation*}
\Lambda _{0}=\{10,3,1+i,1-i\},\ \ \Lambda _{1}=\{-2+2i,-2-2i\},\ \ \Lambda
_{2}=\emptyset,
\end{equation*}
and consider the associated realizable lists 
\begin{equation*}
\Gamma _{1}=\{4,-2+2i,-2-2i\},\ \ \Gamma _{2}=\{\frac{7}{2}\},
\end{equation*}
with realizing matrices \[A_{1}=\begin{bmatrix}0 & 0 & 4 \\ 
2 & 0 & 2  \\ 
0 & 4 & 0 \end{bmatrix}, \ A_{2}=\begin{bmatrix}\frac{7}{2}\end{bmatrix}=JA_{2}J, \ \ JA_{1}J=\begin{bmatrix}0 & 4 & 0 \\ 
 2 & 0 & 2 \\ 
 4 & 0 & 0\end{bmatrix}.\] Then
\begin{equation*}
A=%
\begin{bmatrix}
0 & 0 & 4 &  &  &  &  &  \\ 
2 & 0 & 2 &  &  &  &  &  \\ 
0 & 4 & 0 &  &  &  &  &  \\ 
&  &  & \frac{7}{2} &  &  &  &  \\ 
&  &  &  & \frac{7}{2} &  &  &  \\ 
&  &  &  &  & 0 & 4 & 0 \\ 
&  &  &  &  & 2 & 0 & 2 \\ 
&  &  &  &  & 4 & 0 & 0%
\end{bmatrix} \ \ \text{and} \ \ X=\begin{bmatrix}
1 &  &  &  \\ 
1 &  &  &  \\ 
1 &  &  &  \\ 
& 1 &  &  \\ 
&  & 1 &  \\ 
&  &  & 1 \\ 
&  &  & 1 \\ 
&  &  & 1%
\end{bmatrix}.
\end{equation*}
From Theorem \ref{Sule Com} we compute the centrosymmetric nonnegative matrix with spectrum $\Lambda_{0}$ and diagonal entries $\{4,\frac{7}{2},\frac{7}{2},4\}$
\begin{equation*}
B=
\begin{bmatrix}
4 & 1 & 0 & 3 \\ 
\frac{11}{2} & \frac{7}{2} & \frac{5}{2} & \frac{13}{2} \\ 
\frac{13}{2} & \frac{5}{2} & \frac{7}{2} & \frac{11}{2} \\ 
3 & 0 & 1 & 4
\end{bmatrix}.
\end{equation*}
Then \[C=\begin{bmatrix}
0 & 0 & 0 & 1 & 0 & 0 & 0 & 3 \\ 
\frac{11}{2} & 0 & 0 & 0 & \frac{5}{2} & 0 & 0 & \frac{13}{2} \\ 
\frac{13}{2} & 0 & 0 & \frac{5}{2} & 0 & 0 & 0 & \frac{11}{2} \\ 
3 & 0 & 0 & 0 & 1 & 0 & 0 & 0%
\end{bmatrix}\] is centrosymmetric nonnegative.
Therefore by Theorem \ref{Ana3}
\begin{align*} 
A+XC =
\begin{bmatrix}
0 & 0 & 4 & 1 & 0 & 0 & 0 & 3 \\ 
2 & 0 & 2 & 1 & 0 & 0 & 0 & 3 \\ 
0 & 4 & 0 & 1 & 0 & 0 & 0 & 3 \\ 
\frac{11}{2} & 0 & 0 & \frac{7}{2} & \frac{5}{2} & 0 & 0 & \frac{13}{2} \\ 
\frac{13}{2} & 0 & 0 & \frac{5}{2} & \frac{7}{2} & 0 & 0 & \frac{11}{2} \\ 
3 & 0 & 0 & 0 & 1 & 0 & 4 & 0 \\ 
3 & 0 & 0 & 0 & 1 & 2 & 0 & 2 \\ 
3 & 0 & 0 & 0 & 1 & 4 & 0 & 0%
\end{bmatrix}%
\text{ is the desired matrix.}
\end{align*}
\end{example}

\end{document}